\theoremstyle{plain}% Theorem-like structures provided by amsthm.sty
\newtheorem{theorem}{Theorem}[section]
\newtheorem{corollary}[theorem]{Corollary}
\newtheorem{proposition}[theorem]{Proposition}
\theoremstyle{definition}
\newtheorem{ass}[theorem]{Assumption}
\theoremstyle{remark}
\def\Box{{\hbox{\raisebox{0.0em}{\rlap{$\sqcap$}}\kern0em%
            \raisebox{-0.0em}{$\sqcup$}}} } % square box
\def\bepf{\begin{proof}}
\def\epf{\end{proof}}
\def\R{\mathbb{R}}
\def\lbeq#1{\begin{equation} \label{#1}} 
\def\eeq{\end{equation}} 
\def\bary{\begin{array}}
\def\eary{\end{array}}
\def\eps{\varepsilon}
\def\Rz{\mathbb{R}}
\newcolumntype{?}{!{\vrule width 1pt}}
\newcounter{subsubsubsection}[subsubsection]
\def\subsubsubsectionmark#1{}
\def\subsubsubsection{\@startsection
	{subsubsubsection}{4}{\z@} {-3.25ex plus -1
		ex minus -.2ex}{1.5ex plus .2ex}{\normalsize\bf}}
\def\l@subsubsubsection{\@dottedtocline{4}{4.8em}
	{4.2em}}
\DeclareMathOperator*{\argmin}{argmin}
\tikzset{
	desicion/.style={
		diamond,
		draw,
		text width=5.5em,
		text badly centered,
		inner sep=0pt
	},
	block/.style={
		rectangle,
		draw,
		text width=6em,
		text centered,
		rounded corners
	},
	cloud/.style={
		draw,
		ellipse,
		minimum height=4em
	},
	descr/.style={
		fill=white,
		inner sep=2.5pt
	},
	connector/.style={
		-latex,
		font=\scriptsize
	},
	rectangle connector/.style={
		connector,
		to path={(\tikztostart) -- ++(#1,0pt) \tikztonodes |- (\tikztotarget) },
		pos=0.5
	},
	rectangle connector/.default=-2cm,
	straight connector/.style={
		connector,
		to path=--(\tikztotarget) \tikztonodes
	}
}
\renewcommand{\Re}{\mathbb{R}}
\begin{document}

%\vspace*{-2cm}

\articletype{Full paper}% Specify the article type or omit as appropriate

\title{Worst case complexity bounds for linesearch-type derivative-free algorithms\thanks{Morteza Kimiaei acknowledges the financial support of the Doctoral Program Vienna Graduate School on Computational Optimization (VGSCO) funded by the Austrian Science Foundation under Project No W1260-N35.}}

\author{
\name{A. Brilli\textsuperscript{a}\thanks{A. Brilli. Email: brilli@diag.uniroma1.it}, M. Kimiaei\textsuperscript{b}\thanks{M. Kimiaei. Email: kimiaeim83@univie.ac.at}, G. Liuzzi\textsuperscript{a}\thanks{G. Liuzzi. Email: liuzzi@diag.uniroma1.it} and S. Lucidi\textsuperscript{a}\thanks{S. Lucidi. Email: llucidi@diag.uniroma1.it}}
\affil{\textsuperscript{a}``Sapienza'' Universit\`a di Roma, Dipartimento di Ingegneria Informatica Automatica e Gestionale ``A.Ruberti'', Via Ariosto 25, 00185 Rome, Italy;\\ \textsuperscript{b}Fakult\"at f\"ur Mathematik, Universit\"at Wien, Oskar-Morgenstern-Platz 1, A-1090 Wien, Austria}
}

\maketitle

\begin{abstract}
\nocite{cartis2022evaluation}
This paper is devoted to the analysis of worst case complexity bounds for linesearch-type derivative-free algorithms for the minimization of general non-convex smooth functions. We prove that two linesearch-type algorithms enjoy the same complexity properties which have been proved for  pattern and direct search algorithms. In particular, we consider two derivative-free algorithms based on two different linesearch techniques and manage to prove that the number of iterations and of function evaluations required to drive the norm of the gradient of the objective function below a given threshold $\epsilon$ is ${\cal O}(\epsilon^{-2})$ in the worst case. 
\end{abstract}

\begin{keywords}
Derivative-free optimization;  Unconstrained optimization; Line search; Worst case complexity
\end{keywords}

\begin{amscode} 90C56
\end{amscode}

\section{Introduction}
In this paper we consider the following unconstrained minimization problem 
\begin{equation}\tag{P}\label{uncprob}
\min_{x\in\Re^n}\ f(x)
\end{equation}
We assume that $f:\Re^n\to\Re$ is a black-box function which is known by means of an oracle that only outputs function values. Hence, derivatives of $f$ cannot neither be approximated nor computed explicitly. Even though derivatives of $f$ are not available, we assume that $f$ is continuously differentiable with Lipschitz-continuous gradient.  

Over the past decade, the analysis of worst case complexity for optimization algorithms has gained more and more interest and attracted many researchers \cite{cartis2022evaluation,amaral2022complexity}. Specifically for derivative-free algorithms, 
in \cite{vicente:13,Dodangeh:16} worst case complexity bounds have derived for direct search methods using sufficient decrease in $f$. In particular, it has been proved that direct search methods (based on a search step and a poll step and using sufficient decrease acceptability) require at most ${\cal O}(\epsilon^{-2})$ iterations and ${\cal O}(n^2\epsilon^{-2})$ function evaluations to drive the norm of the gradient below $\epsilon$. 

Analogous results for linesearch-based derivative algorithms (see e.g. \cite{lucidi2002global,fasano2014linesearch}) have not yet been established. The latter algorithms typically have stronger asymptotic convergence properties which are tied to the use of suitable though more complex extrapolation techniques. Hence, the analysis of the worst case complexity of such algorithms is more complicated.  However, we manage to obtain almost the same complexity bounds for both the number of iterations and function evaluations as those proved for direct search methods.

The paper is organized as follows. In section \ref{sec:algo} we describe a general framework of derivative-free algorithm based on linesearch techniques and we propose two variants, namely a ``standard'' and a ``new'' version of the framework. In section \ref{sec:asymp} we first derive a bound on the norm of the gradient and then we prove the asymptotic convergence of the algorithm model to stationary points of the objective function. In section \ref{sec:complexity} we derive the worst case complexity bounds for the number of iterations and function evaluations required by the algorithm to drive the norm of the gradient below a prefixed tolerance. Finally, in section \ref{sec:conclusion} we draw some conclusions.

%%%%%%%%%%%%%%%%%%%%%%%%%%%%%%%%%%%%%%%%%%%%%%%%%%%%%%%%%%%%%%%%%%%%%%%%%%%%%%%
%%%%%%%%%%%%%%%%%%%%%%%%%%%%%%%%%%%%%%%%%%%%%%%%%%%%%%%%%%%%%%%%%%%%%%%%%%%%%%%
%%%%%%%%%%%%%%%%%%%%%%%%%%%%%%%%%%%%%%%%%%%%%%%%%%%%%%%%%%%%%%%%%%%%%%%%%%%%%%%

\section{Linesearch-type algorithms}\label{sec:algo}
In this section we introduce two linesearch-type derivative-free algorithms. They both share the same general framework which is reported below.

%More formally, DFL Algorithm scheme is presented below.

\par\noindent\medskip

\noindent\framebox[\textwidth]{\parbox{0.95\textwidth}{%\small
\par\bigskip
\centerline{ {\bf  Linesearch Algorithm Model (LAM)}}
\par\medskip
  {\bf Data.} $c\in(0,1),\theta\in(0,1)$, $x_0\in \R^n$, $\tilde\alpha_0^i > 0$, $i\in \{1,\dots,n\}$, and set $d_0^i=e^i$, for $i=1,\ldots,n$.

\par\medskip

{\bf For} $k=0,1,\dots$

 %\par\medskip
 %\qquad Set $\alpha_k^{M}=\max\{\alpha_k^i, \ i\in I_c\}$

\qquad Set $y_k^1=x_k$.

\qquad {\bf For} $i=1,\dots,n$

%\qquad\qquad {\bf If} $\tilde\alpha_k^i \geq c\displaystyle\max_{i=1,\dots,n}\{\tilde\alpha_k^i\}$ {\bf then} 
%
%\medskip

\qquad\qquad Let $\bar\alpha_k^i = \max\{\tilde\alpha_k^i,c\displaystyle\max_{j=1,\dots,n}\{\tilde\alpha_k^j\}\}.$

%\medskip

\qquad\qquad Compute $\alpha$ and $d$ by the {\tt DF-Linesearch}$(\bar\alpha_k^i,y_k^i,d_k^i;\alpha,d).$

\medskip

\qquad\qquad {\bf If} $\alpha = 0$ {\bf then} set $\alpha_k^i = 0$ and $\tilde\alpha_{k+1}^i = \theta\bar\alpha_k^i$.

\qquad\qquad {\bf else} set $\alpha_k^i = \alpha$, $\tilde\alpha_{k+1}^i = \alpha$.

%\medskip

%\qquad\qquad {\bf else} set $\alpha_k^i = 0$, $\tilde\alpha_{k+1}^i = \tilde\alpha_k^i$.

\medskip

 \qquad\qquad Set $d_{k+1}^i=d$, $y_k^{i+1}=y_k^i+\alpha_k^id_{k+1}^i$.
\par\medskip

 \qquad {\bf End For}

 \qquad Set $x_{k+1}=y_k^{n+1}$.

{\bf End For}

\par\bigskip\noindent

}}

\par\medskip\medskip

As we can see, at each iteration $k$, LAM performs an exploration of the space around the current iterate $x_k$ using the coordinate directions and producing the points $y_k^i$, $i=1,\dots,n+1$ (note that $y_k^1 = x_k$).

More in particular, points $y_k^i$, for $i=2,\dots,n+1$, are computed by means of a suitable derivative-free linesearch, namely the {\tt DF-Linesearch} procedure. The linesearch is invoked by passing a tentative step size, i.e. 
\[
\bar\alpha_k^i = \max\{\tilde\alpha_k^i,c\displaystyle\max_{j=1,\dots,n}\{\tilde\alpha_k^j\}\}.
\] 
Then, an actual step size, i.e. $\alpha$ is produced, which can either be $0$ or strictly greater than zero. When $\alpha = 0$, the linesearch ``fails'' and the tentaive step for the next iteration is reduced; when $\alpha > 0$, the tentative step for the next iteration is updated and possibly augmented.  

Concerning the actual definition of suitable linesearch procedures, i.e. of the {\tt DF-Linesearch} procedure, we report two possible and rather different schemes. The first scheme (the standard DF-Linesearch) is a quite standard linesearch based on an extrapolation with sufficient decrease (see for example \cite{lucidi2002global,fasano2014linesearch}). Roughly speaking, if sufficient decrease can be obtained with the initial step size along $d$ or $-d$ an extrapolation is used to exploit as much as possible the ``descent'' property of the search direction until sufficient decrease with respect to the initial point can be achieved, i.e.
\[
\displaystyle f \bigg (y+\frac{\alpha} {\delta} \hat d \bigg )\leq f(y)-\gamma\frac{\alpha^2} {\delta^2}.
\]

\par\medskip\medskip
\noindent\framebox[\textwidth]{\parbox{0.95\textwidth}{
\par\bigskip
\centerline{{\bf (standard) DF-Linesearch  ($\bar\alpha,y,d;\alpha,\hat d$).}}
\begin{itemize}
 \item[ ] {\bf Data.} $\gamma > 0$, $\delta \in (0,1)$.
 \item[ ] {\bf Step 1.} Set $\alpha=\bar\alpha$, $\hat d \leftarrow d$.
 \item[ ] {\bf Step 2.} {\bf If} $f(y+\alpha \hat d)\le f(y)-\gamma\alpha^2$ {\bf then} go to Step 5.
 \item[ ] {\bf Step 3.} {\bf If} $f(y-\alpha \hat d)\le f(y)-\gamma\alpha^2$ {\bf then} set $\hat d \leftarrow -d$ and go to Step 5.
 \item[ ] {\bf Step 4.} Set $\alpha = 0$ and {\bf return}.
 \item[ ] {\bf Step 5.} {\bf While}   $\displaystyle f \bigg (y+\frac{\alpha} {\delta} \hat d \bigg )\leq f(y)-\gamma\left(\frac{\alpha} {\delta}\right)^2$ 
 \item[ ] \qquad\qquad\qquad\quad $\alpha \leftarrow {\alpha}/{\delta}$.
 \item[ ] {\bf Step 6.} {\bf Return} $(\alpha, \hat d)$
\end{itemize}
\par\bigskip\noindent
}}

One possible disadvantage of the standard DF-Linesearch is that intermediate points (those produced during the extrapolation) do not play a very significant role. In particular, it could happen that the final step size is not the one producing the best reduction with respect to the initial point. Driven by the preceding consideration and inspired by \cite{grippo1988global}, we try to take into bigger account intermediate points by introducing a somewhat new DF-Linesearch procedure which is based on a different sufficient decrease criterion. In particular, as we can see, the sufficient decrease condition is checked between consecutive points, i.e.
\[
\displaystyle f \bigg (y+\frac{\alpha}{\delta} \hat d \bigg )\leq f(y+\alpha \hat{d})-\gamma\bigg(\bigg(\frac{1}{\delta}-1\bigg)\alpha\bigg)^2.
\]

\par\medskip
\noindent\framebox[\textwidth]{\parbox{0.95\textwidth}{
\par\bigskip
\centerline{{\bf (new) DF-Linesearch  ($\bar\alpha,y,d;\alpha,\hat d$).}}
\begin{itemize}
 \item[ ] {\bf Data.} $\gamma > 0$, $\delta \in (0,1)$.
 \item[ ] {\bf Step 1.} Set $\alpha=\bar\alpha$, $\hat d \leftarrow d$.
 \item[ ] {\bf Step 2.} {\bf If} $f(y+\alpha \hat d)\le f(y)-\gamma\alpha^2$ {\bf then} go to Step 5.
 \item[ ] {\bf Step 3.} {\bf If} $f(y-\alpha \hat d)\le f(y)-\gamma\alpha^2$ {\bf then} set $\hat d \leftarrow -d$ and go to Step 5.
 \item[ ] {\bf Step 4.} Set $\alpha = 0$ and {\bf return}.
 %\item[ ] {\bf Step 5.} {\bf While}  \bigg ( $\displaystyle f \bigg (y+\frac{\alpha} {1} \hat d \bigg )\leq f(y)-\gamma\frac{\alpha^2} {1}$ \bigg )
 \item[ ] {\bf Step 5.} {\bf While}   $\displaystyle f \bigg (y+\frac{\alpha}{\delta} \hat d \bigg )\leq f(y+\alpha \hat{d})-\gamma\bigg(\bigg(\frac{1}{\delta}-1\bigg)\alpha\bigg)^2$ 
 \item[ ] \qquad\qquad\qquad\quad $\alpha \leftarrow {\alpha}/{\delta}$.
 %\item[ ] \qquad\qquad\qquad\quad $y \leftarrow y+{\alpha}/{\delta}$.
 \item[ ] {\bf Step 6.} {\bf Return} $(\alpha, \hat d)$
\end{itemize}
\par\bigskip\noindent
}}

\par\bigskip\bigskip

% which is a new version of the classical extrapolation with sufficient decrease (see e.g. \cite{lucidi2002global,LucidiSciandrone:02}). In fact, usually in the linesearch the reference point $y$ and corresponding function value $f(y)$ is held fixed; function values on the new points are then accepted if the sufficient decrease inequality $f(y+\alpha d) \leq f(y)-\gamma\alpha^2\|d\|^2$ is satisfied. The Continuous search procedure that we are proposing is different in that the new point $y+\frac{\alpha}{\delta}d$ is accepted when 
% \[
% f\left(y+\frac{\alpha}{\delta}d\right) \leq f(y+\alpha d) -\gamma\left(\frac{\alpha}{\delta}-\alpha\right)^2.
% \]
% That is, $y+\frac{\alpha}{\delta}d$ if it sufficiently reduces the function value with respect to the previously accepted point $y+{\alpha}d$. 
Note that the new linesearch technique has a different behavior than the classical one as it can be seen in Figures \ref{fig:my_label1} and \ref{fig:my_label2}. In particular, Figure \ref{fig:my_label1} shows that the new lineasearch could be less restrictive than the classical one, i.e. it accepts steps that would be refused by the classical one. Furthermore, Figure \ref{fig:my_label2} shows that  steps producing moderate reduction with respect to the last accepted point are not accepted by the new linesearch whereas they would have been accepted by the classical one.

% The Continuous search tries to exploit the direction $e^i$ (or its opposite $-e^i$) by performing a derivative-free linesearch along it. At each iteration and along each direction $d_k^i$, $i=1,\dots,n$, a tentative step size 
% \[
% \bar\alpha_k^i = \max\{\tilde\alpha_k^i,c\displaystyle\max_{j=1,\dots,n}\{\tilde\alpha_k^j\}\},
% \] 
% is tested. If sufficient decrease is obtained along $d_k^i$ or $-d_k^i$, DFL extrapolates until the sufficient decrease condition is satisfied. If sufficient decrease cannot be achieved, the initial step size $\tilde\alpha_k^i$ is reduced by a constant factor thus producing the initial step size for subsequent iteration. When, on the contrary, the Continuous Search procedure extrapolates, the initial stepsize for next iteration is updated.

\begin{figure}[htb]
    \centering
    \includegraphics[width=0.8\textwidth]{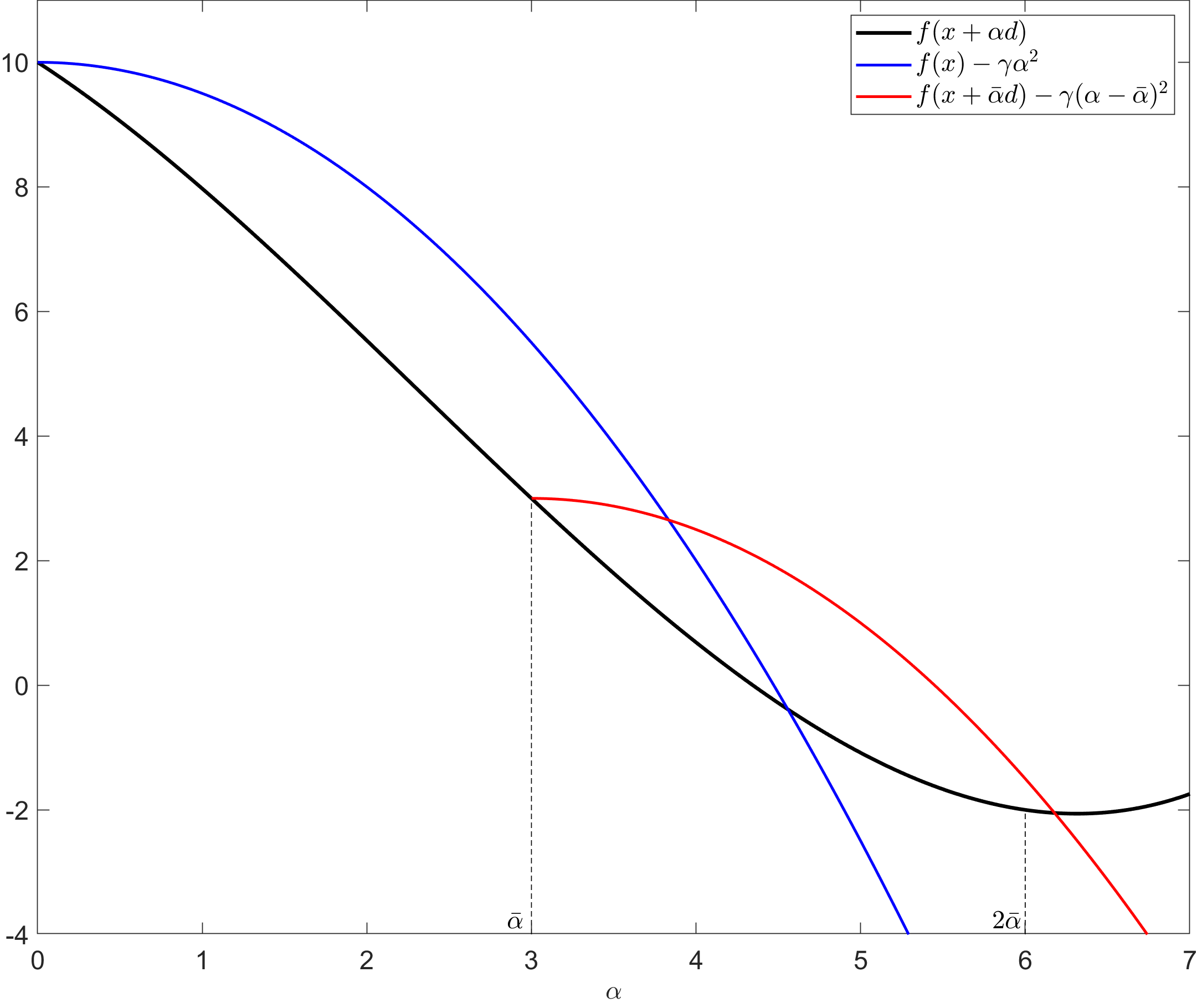}
    \caption{Comparison between new and classical linesearch. The function value along the line $x+\alpha d$ is reported in black; the blue line represents the sufficient reduction value in the classical linesearch (i.e. $f(x)-\gamma\alpha^2$) whereas the red line depicts the sufficient reduction of the new linesearch (i.e. $f(x+\bar\alpha d) -\gamma(\alpha-\bar\alpha)^2$). Here, $\bar\alpha = 3$ and $2\bar\alpha = 6$.}
    \label{fig:my_label1}
\end{figure}
\begin{figure}[htb]
    \centering
    \includegraphics[width=0.8\textwidth]{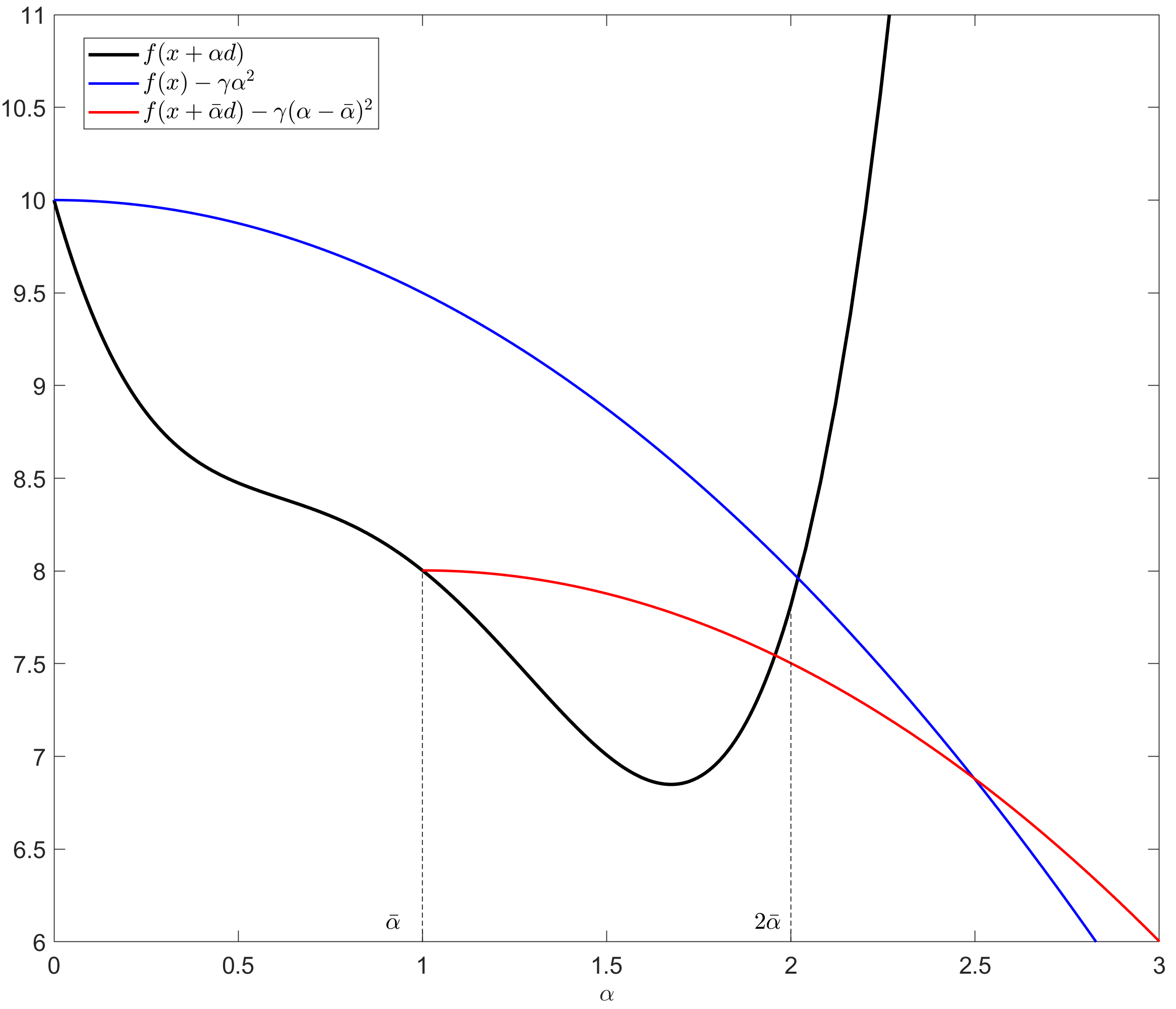}
    \caption{Comparison between new and classical linesearch. The function value along the line $x+\alpha d$ is reported in black; the blue line represents the sufficient reduction value in the classical linesearch (i.e. $f(x)-\gamma\alpha^2$) whereas the red line depicts the sufficient reduction of the new linesearch (i.e. $f(x+\bar\alpha d) -\gamma(\alpha-\bar\alpha)^2$). Here, $\bar\alpha = 1$ and $2\bar\alpha = 2$.}
    \label{fig:my_label2}
\end{figure}

% Then we formally define the Continuous search procedure. The Continuous search procedure is defined by
% specifying values for parameters $\gamma$ and $\delta$ which are used, respectively, in the sufficient reduction criterion and
% for the expansion of the step.

\section{Asymptotic convergence analysis for {\rm LAM}}\label{sec:asymp}

In order to carry out the convergence analysis for LAM, we need the following standard assumption: 

\begin{ass}\label{gradlip}
The function $f$ is continuously differentiable on $\Rz^n$, 
and its gradient is Lipschitz continuous with Lipschitz constant $L$, i.e. for all $x,y\in\Re^n$,
\[
\|\nabla f(x)-\nabla f(y)\| \leq L \|x-y\|.
\]
\end{ass}

In the following, the standard LAM employs the standard DF-Linesearch and the new LAM employs the new DF-Linesearch.

First of all, we derive an upper bound on the norm of $\nabla f$ at each iteration $k$.

\begin{proposition}\label{bound_grad}
	Suppose that Assumption \ref{gradlip} holds and that $f$ is bounded from below.  Let $\{x_k\}$ be the sequence produced by the {LAM} framework.
	Then, for each $k$ such that $x_{k+1}\neq  x_k$
	\[
	\|\nabla f(x_k)\| \leq \sqrt{n}\left(\frac{\gamma+L(\sqrt{n}+1)}{\min\{\theta,\delta\}} \right)\max_{i=1,\dots,n}\{\tilde\alpha^i_{k+1}\},
	\]
	whereas, for each $k$ such that $x_{k+1} = x_k$
	\[
	\|\nabla f(x_k)\| \leq \sqrt{n}\frac{\gamma + L}{\theta}\max_{i=1,\dots,n}\{\tilde\alpha^i_{k+1}\}.
	\]
\end{proposition}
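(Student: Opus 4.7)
The plan is to bound $|\partial_i f(x_k)|$ coordinate-by-coordinate for $i=1,\dots,n$ and then to combine these estimates via $\|\nabla f(x_k)\|\leq\sqrt{n}\max_i|\partial_i f(x_k)|$. Two preliminary facts underpin the whole argument. First, a quick induction on $k$ shows that $d_k^i\in\{+e^i,-e^i\}$ for every $i$ and $k$: the initialization is $d_0^i=e^i$, and each DF-Linesearch call either keeps $\hat d=d$ or sets $\hat d=-d$. Hence $|\nabla f(z)^T d_k^i|=|\partial_i f(z)|$ at any point $z$. Second, because the vectors $d_{k+1}^j$ point along pairwise orthogonal coordinate axes, the telescoping identity $y_k^i-x_k=\sum_{j=1}^{i-1}\alpha_k^j d_{k+1}^j$ yields $\|y_k^i-x_k\|\leq\sqrt{n}\max_j\alpha_k^j\leq\sqrt{n}\max_j\tilde\alpha_{k+1}^j$, which will control the Lipschitz error when transferring gradient estimates from $y_k^i$ (or $y_k^{i+1}$) back to $x_k$.

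For a coordinate $i$ at which the line search fails, Steps 2 and 3 of both DF-Linesearch variants both reject sufficient decrease, yielding $f(y_k^i\pm\bar\alpha_k^i d_k^i)>f(y_k^i)-\gamma(\bar\alpha_k^i)^2$. Inserting the two second-order Taylor estimates $f(y_k^i\pm h)\leq f(y_k^i)\pm h^T\nabla f(y_k^i)+(L/2)\|h\|^2$ and combining these two inequalities produces the two-sided bound $|\partial_i f(y_k^i)|\leq(\gamma+L/2)\bar\alpha_k^i$; since the failure update enforces $\tilde\alpha_{k+1}^i=\theta\bar\alpha_k^i$, this rewrites as $|\partial_i f(y_k^i)|\leq(\gamma+L/2)\tilde\alpha_{k+1}^i/\theta$.

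For a coordinate $i$ at which the line search succeeds, the extrapolation loop of Step 5 exits with a trial step $\alpha_k^i/\delta$ that violates its while-condition. For the standard DF-Linesearch, this reads $f(y_k^i+(\alpha_k^i/\delta)\hat d)>f(y_k^i)-\gamma(\alpha_k^i/\delta)^2$, and Taylor gives the lower bound $\nabla f(y_k^i)^T\hat d>-(\gamma+L/2)\alpha_k^i/\delta$; a matching upper bound follows from the initial sufficient decrease $f(y_k^i+\bar\alpha_k^i\hat d)\leq f(y_k^i)-\gamma(\bar\alpha_k^i)^2$ together with the lower Taylor estimate, producing $|\partial_i f(y_k^i)|\leq(\gamma+L/2)\tilde\alpha_{k+1}^i/\delta$ after using $\bar\alpha_k^i\leq\alpha_k^i/\delta$. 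For the new DF-Linesearch the exit condition is anchored at the shifted point $y_k^{i+1}=y_k^i+\alpha_k^i\hat d$, so the analogous Taylor argument yields $\nabla f(y_k^{i+1})^T\hat d>-(\gamma+L/2)(1-\delta)\alpha_k^i/\delta$; the matching upper bound is obtained either from the last successful loop iterate (if the loop iterated) or from the initial sufficient decrease (if it did not), followed by a single Lipschitz transfer over a step of length at most $(1-\delta)\alpha_k^i$, which absorbs cleanly into the same constant.

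Finally, transferring the partial-derivative bound from $y_k^i$ or $y_k^{i+1}$ back to $x_k$ via $|\partial_i f(x_k)-\partial_i f(\cdot)|\leq L\|\cdot-x_k\|\leq L\sqrt{n}\max_j\tilde\alpha_{k+1}^j$, together with the routine estimates $1/\theta,1/\delta\leq 1/\min\{\theta,\delta\}$ and $L/2\leq L$, gives
\[
|\partial_i f(x_k)|\leq\frac{\gamma+L(\sqrt{n}+1)}{\min\{\theta,\delta\}}\max_j\tilde\alpha_{k+1}^j,
\]
and the first claimed bound follows from $\|\nabla f(x_k)\|\leq\sqrt{n}\max_i|\partial_i f(x_k)|$. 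The special case $x_{k+1}=x_k$ corresponds to every line search failing, so $y_k^i=x_k$ for all $i$: the Lipschitz transfer is unnecessary and the sharper constant $(\gamma+L)/\theta$ emerges directly. The main obstacle is the success case for the new DF-Linesearch, since its sufficient-decrease criterion compares consecutive trial points rather than the base point $y_k^i$: the gradient estimate then naturally sits at the shifted point $y_k^{i+1}$, and obtaining a clean two-sided bound requires careful bookkeeping of which Taylor expansion is applied at which point plus one extra Lipschitz transfer, and then checking that the resulting constants still fit inside the advertised $\gamma+L(\sqrt{n}+1)$.
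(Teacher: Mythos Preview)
Your proposal is correct and follows essentially the same coordinate-wise strategy as the paper's proof: bound $|\nabla f(x_k)^Te_i|$ from the two failed trial steps at $y_k^i$ (Case~(i)) or from the loop-exit condition together with a sufficient-decrease condition (Case~(ii)), then transfer the estimate back to $x_k$ via Lipschitz continuity of $\nabla f$ and the bound $\|y_k^i-x_k\|\leq\sqrt{n}\max_j\tilde\alpha_{k+1}^j$. The only technical variation is that you use the descent lemma (Taylor with remainder $L/2$) where the paper applies the Mean Value Theorem, which gives you the slightly tighter intermediate constant $\gamma+L/2$ that you then relax to $\gamma+L$; you are also explicit about the sub-case in the new DF-Linesearch where the while loop does not iterate, which the paper glosses over.
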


%\begin{proof} See the proof of Proposition 3 in \cite{LucidiSciandrone:02}
%\end{proof}

%\begin{proposition}\label{bound_grad}
%Suppose that Assumption \ref{gradlip} holds and that $f$ is bounded from below.  Let $\{x_k\}$ be the sequence produced by Algorithm DFL.
% Then, for each $k$
% \[
%  \|\nabla f(x_k)\| \leq \sqrt{n}\left(\frac{\gamma+L(\sqrt{n}+1)}{\min\{\theta,\delta\}} \right)\max_{i=1,\dots,n}\{\tilde\alpha_k^i\}.
% \]
%\end{proposition}
%
\begin{proof}
For each iteration $k$ such that $x_{k+1}\neq x_{k}$ and every index $i=1,\dots,n$, one of two cases can occur:

\underline{Case (i)}. By $\alpha_{k}^i = 0$, and $\tilde\alpha_{k+1}^i = \theta\bar\alpha_{k}^i$, we have:
\begin{eqnarray*}
 && f(y_{k}^i+\bar\alpha_k^i e_i)> f(y_{k}^i)-\gamma(\bar\alpha_k^i)^2,\\
 && f(y_{k}^i-\bar\alpha_k^i e_i)> f(y_{k}^i)-\gamma(\bar\alpha_k^i)^2.
\end{eqnarray*}
Then we get from the Mean Value Theorem
\begin{eqnarray}
&& \nabla f(u_k^i )^Te_i > -\gamma\bar\alpha_k^i,\label{eq1}\\
&& \nabla f(v_k^i )^Te_i <  \gamma\bar\alpha_k^i\label{eq2}
\end{eqnarray}

where $u_k^i=y_{k}^i+\lambda_k^i \bar\alpha_k^i e_i$ and $v_k^i=y_{k}^i-\mu_k^i\bar\alpha_k^i e_i$ with $\lambda_k^i, \mu_k^i \in (0,1)$.
{}From (\ref{eq1}) and (\ref{eq2}) and the Lipschitz continuity of $\nabla f$, we have that
\begin{eqnarray*}
 && \nabla f(x_k)^T e_i >          - \gamma\bar\alpha_k^i -L \|x_k - u_k^i\| >          - \gamma\bar\alpha_k^i -L \|x_k - y_{k}^i\| - L \bar\alpha_k^i,\\
 && \nabla f(x_k)^T e_i < \phantom{-}\gamma\bar\alpha_k^i +L \|x_k - v_k^i\| < \phantom{-}\gamma\bar\alpha_k^i +L \|x_k - y_{k}^i\| + L \bar\alpha_k^i.
\end{eqnarray*}
Hence
\[
\begin{split}
 & |\nabla f(x_k)^T e_i| < (\gamma+L)\bar\alpha_k^i +L \|x_k - y_{k}^i\| \leq (\gamma+L)\bar\alpha_k^i +L\sqrt{n}\max_{i=1,\dots,n}\{\tilde\alpha_{{k+1}}^i\} = \\ &\qquad (\gamma+L)\frac{\tilde\alpha_{k+1}^i}{\theta} +L\sqrt{n}\max_{i=1,\dots,n}\{\tilde\alpha_{{k+1}}^i\} \leq \frac{(\gamma+L)}{\theta}\max_{i=1,\dots,n}\{\tilde\alpha_{k+1}^i\} +L\sqrt{n}\max_{i=1,\dots,n}\{\tilde\alpha_{{k+1}}^i\},
 \end{split}
\]
so that
%and, since $\max_{i=1,\dots,n}\{\bar\alpha_k^i\} = \max_{i=1,\dots,n}\{\tilde\alpha_k^i\}$,
\[
 |\nabla f(x_k)^T e_i| \leq \left(\frac{\gamma+L(\sqrt{n}+1)}{\theta} \right)\max_{i=1,\dots,n}\{\tilde\alpha_{k+1}^i\}.
\]

\underline{Case (ii) (for the standard LAM)}. From $\alpha_{k}^i = \alpha$, and $\tilde\alpha_{k+1}^i = \alpha\geq \bar\alpha_k^i$, we result in either
\[
  f\left(y_{k}^i+\frac{\tilde\alpha_{k+1}^i}{\delta} e_i\right)> f(y_{k}^i)-\gamma\left(\frac{\tilde\alpha_{k+1}^i}{\delta}\right)^2,\quad
  f(y_{k}^i) \geq f(y_{k}^i + \tilde\alpha_{k+1}^i e_i) + \gamma(\tilde\alpha_{k+1}^i)^2
\]
or
\[
  f\left(y_{k}^i-\frac{\tilde\alpha_{k+1}^i}{\delta} e_i\right)> f(y_{k}^i)-\gamma\left(\frac{\tilde\alpha_{k+1}^i}{\delta}\right)^2,\quad
  f(y_{k}^i) \geq f(y_{k}^i - \tilde\alpha_{k+1}^i e_i) + \gamma(\tilde\alpha_{k+1}^i)^2.
\]
Then, we get,
\begin{equation}\label{caseii_cond1_1}
\nabla f(\bar u_k^i )^Te_i > -\gamma\frac{\tilde\alpha_{k+1}^i}{\delta}, \quad -\nabla f(\hat u_k^i )^Te_i \geq \gamma(\tilde\alpha_{k+1}^i),
\end{equation}
or
\begin{equation}\label{caseii_cond2_1}
\nabla f(\bar v_k^i )^Te_i <  \gamma\frac{\tilde\alpha_{k+1}^i}{\delta}, \quad -\nabla f(\hat v_k^i )^Te_i \leq -\gamma(\tilde\alpha_{k+1}^i)
\end{equation}
where 
$\bar u_k^i=y_{k}^i+\bar\lambda_k^i \displaystyle\frac{\tilde\alpha_{k+1}^i}{\delta} e_i$,
$\hat u_k^i=y_{k}^i+\hat\lambda_k^i \displaystyle\frac{\tilde\alpha_{k+1}^i}{\delta} e_i$, 
$\bar v_k^i=y_{k}^i-\bar\mu_k^i\displaystyle\frac{\tilde\alpha_{k+1}^i}{\delta} e_i$, and
$\hat v_k^i=y_{k}^i-\hat\mu_k^i\displaystyle\frac{\tilde\alpha_{k+1}^i}{\delta} e_i$, with 
$\bar\lambda_k^i, \hat\lambda_k^i, \bar\mu_k^i, \hat\mu_k^i \in (0,1)$.

{}When (\ref{caseii_cond1_1}) holds, from $\nabla f(\bar u_k^i )^Te_i > -\gamma\displaystyle\frac{\tilde\alpha_{k+1}^i}{\delta}$ we can write
\[
 [\nabla f(\bar u_k^i ) - \nabla f(x_k) + \nabla f(x_k)]^Te_i > -\gamma\frac{\tilde\alpha_{k+1}^i}{\delta},
\]
so that we obtain
\begin{equation}\label{bound1_left_1}
 \nabla f(x_k)^Te_i > -\gamma\frac{\tilde\alpha_{k+1}^i}{\delta} - L\|x_k-\bar u_k^i\| > -\gamma\frac{\tilde\alpha_{k+1}^i}{\delta} -L \|x_k - y_{k}^i\| - L \frac{\tilde\alpha_{k+1}^i}{\delta}.
\end{equation}
{}From $\nabla f(\hat u_k^i )^Te_i \leq -\gamma\tilde\alpha_{k+1}^i$ in (\ref{caseii_cond1_1}) we can write
\[
 [\nabla f(\hat u_k^i ) - \nabla f(x_k) + \nabla f(x_k)]^Te_i \leq -\gamma\tilde\alpha_{k+1}^i,
\]
so that, in this case, we obtain
\begin{equation}\label{bound1_right_1}
 \nabla f(x_k)^Te_i \leq  -\gamma\tilde\alpha_{k+1}^i + L\|x_k-\hat u_k^i\| \leq  \gamma\frac{\tilde\alpha_{k+1}^i}{\delta} + L \|x_k - y_{k}^i\| + L \frac{\tilde\alpha_{k+1}^i}{\delta}.
\end{equation}

\underline{Case (ii) (for the new LAM)}. From $\alpha_{k}^i = \alpha$, and $\tilde\alpha_{k+1}^i = \alpha\geq \bar\alpha_k^i$, we results in either
\[
\begin{split}
  & f\left(y_{k}^i+\frac{\tilde\alpha_{k+1}^i}{\delta} e_i\right)> f(y_{k}^i+\tilde\alpha_{k+1}^i e_i)-\gamma\left(\frac{1}{\delta}-1\right)^2(\tilde\alpha_{k+1}^i)^2,\\ %\quad
  & f(y_{k}^i+\delta \tilde\alpha_{k+1}^i e_i) \geq f(y_{k}^i + \tilde\alpha_{k+1}^i e_i) + \gamma(1-\delta)^2(\tilde\alpha_{k+1}^i)^2
\end{split}
\]
or
\[
\begin{split}
  & f\left(y_{k}^i-\frac{\tilde\alpha_{k+1}^i}{\delta} e_i\right)> f(y_{k}^i-\tilde\alpha_{k+1}^i e_i)-\gamma\left(\frac{1}{\delta}-1\right)^2(\tilde\alpha_{k+1}^i)^2,\\ %\quad
  & f(y_{k}^i-\delta \tilde\alpha_{k+1}^i e_i) \geq f(y_{k}^i - \tilde\alpha_{k+1}^i e_i) + \gamma(1-\delta)^2(\tilde\alpha_{k+1}^i)^2.
\end{split}
\]
Then, we get,
\begin{equation}\label{caseii_cond1}
\nabla f(\bar u_k^i )^Te_i > -\gamma\left(\frac{1-\delta}{\delta}\right)\tilde\alpha_{k+1}^i, \quad -\nabla f(\hat u_k^i )^Te_i \geq \gamma(1-\delta)\tilde\alpha_{k+1}^i,
\end{equation}
or
\begin{equation}\label{caseii_cond2}
\nabla f(\bar v_k^i )^Te_i <  \gamma\left(\frac{1-\delta}{\delta}\right)\tilde\alpha_{k+1}^i, \quad -\nabla f(\hat v_k^i )^Te_i \leq -\gamma(1-\delta)\tilde\alpha_{k+1}^i
\end{equation}
{where 
$\bar u_k^i=y_{k}^i+\bar\lambda_k^i \left(\dfrac{1-\delta}{\delta}\right)\tilde\alpha_{k+1}^i e_i$,
$\hat u_k^i=y_{k}^i+\hat\lambda_k^i (1-\delta)\tilde\alpha_{k+1}^i e_i$, 
$\bar v_k^i=y_{k}^i-\bar\mu_k^i\left(\dfrac{1-\delta}{\delta}\right)\tilde\alpha_{k+1}^i e_i$, and
$\hat v_k^i=y_{k}^i-\hat\mu_k^i(1-\delta)\tilde\alpha_{k+1}^i e_i$, with 
$\bar\lambda_k^i, \hat\lambda_k^i, \bar\mu_k^i, \hat\mu_k^i \in (0,1)$.}

{}When (\ref{caseii_cond1}) holds, from $\nabla f(\bar u_k^i )^Te_i > -\gamma\left(\dfrac{1-\delta}{\delta}\right)\tilde\alpha_{k+1}^i$ we can write
\[
 [\nabla f(\bar u_k^i ) - \nabla f(x_k) + \nabla f(x_k)]^Te_i > -\gamma\left(\frac{1-\delta}{\delta}\right)\tilde\alpha_{k+1}^i,
\]
so that we obtain
\begin{equation}\label{bound1_left}
\begin{split}
 \nabla f(x_k)^Te_i & > -\gamma\left(\frac{1-\delta}{\delta}\right)\tilde\alpha_{k+1}^i - L\|x_k-\bar u_k^i\| \\
 & > -\gamma\left(\frac{1-\delta}{\delta}\right)\tilde\alpha_{k+1}^i -L \|x_k - y_{k}^i\| - L (\frac{1-\delta}{\delta})\tilde\alpha_{k+1}^i.
 \end{split}
\end{equation}
{}From $\nabla f(\hat u_k^i )^Te_i \leq -\gamma(1-\delta)\tilde\alpha_{k+1}^i$ in (\ref{caseii_cond1}), we can write
\[
 [\nabla f(\hat u_k^i ) - \nabla f(x_k) + \nabla f(x_k)]^Te_i \leq -\gamma(1-\delta)\tilde\alpha_{k+1}^i,
\]
so that, in this case, we obtain
\begin{equation}\label{bound1_right}
\begin{split}
 \nabla f(x_k)^Te_i & \leq  -\gamma(1-\delta)\tilde\alpha_{k+1}^i + L\|x_k-\hat u_k^i\| \\
 & \leq  \gamma\left(\frac{1-\delta}{\delta}\right)\tilde\alpha_{k+1}^i + L \|x_k - y_{k}^i\| + L \left(\frac{1-\delta}{\delta}\right)\tilde\alpha_{k+1}^i.
\end{split}
\end{equation}

Now, considering (\ref{bound1_left_1}) and (\ref{bound1_right_1}) for the standard LAM and (\ref{bound1_left}) and (\ref{bound1_right}) for the new LAM, we get
\[
 |\nabla f(x_k)^T e_i| \leq \left(\frac{\gamma+L(\sqrt{n}+1)}{\delta} \right)\max_{i=1,\dots,n}\{\tilde\alpha_{k+1}^i\}.
\]
The same bound can be obtained when (\ref{caseii_cond2}) holds. Thus, 
finally, we obtain 
\[
 \|\nabla f(x_k)\| \leq\sqrt{n}\left(\frac{\gamma+L(\sqrt{n}+1)}{\min\{\theta,\delta\}} \right)\max_{i=1,\dots,n}\{\tilde\alpha_{k+1}^i\}.
\]
On the other hand, for each iteration $k$ such that $x_{k+1}=x_{k}$, i.e. $y_{k}^i = x_{k}$ for all $i=1,\dots,n+1$, we have for every index $i=1,\dots,n$
\begin{eqnarray*}
	&& f\left(x_{k}+\frac{\tilde\alpha_{k+1}^i}{\theta} e_i\right)> f(x_{k})-\gamma\left(\frac{\tilde\alpha_{k+1}^i}{\theta}\right)^2,\\
	&& f\left(x_{k}-\frac{\tilde\alpha_{k+1}^i}{\theta} e_i\right)> f(x_{k})-\gamma\left(\frac{\tilde\alpha_{k+1}^i}{\theta}\right)^2.
\end{eqnarray*}
Then we get from the Mean Value Theorem
\begin{eqnarray}
	&& \nabla f(u_k^i )^Te_i > -\gamma\frac{\tilde\alpha_{k+1}^i}{\theta},\label{eq1_1}\\
	&& \nabla f(v_k^i )^Te_i <  \gamma\frac{\tilde\alpha_{k+1}^i}{\theta}\label{eq2_1},
\end{eqnarray}
where $u_k^i=x_{k}+\lambda_k^i \dfrac{\tilde\alpha_{k+1}^i}{\theta} e_i$ and $v_k^i=x_{k}-\mu_k^i\dfrac{\tilde\alpha_{k+1}^i}{\theta} e_i$ with $\lambda_k^i, \mu_k^i \in (0,1)$.
{}From (\ref{eq1_1}) and (\ref{eq2_1}) and the Lipschitz continuity of $\nabla f$, we have that
\begin{eqnarray*}
	&& \nabla f(x_k)^T e_i >          - \gamma\frac{\tilde\alpha_{k+1}^i}{\theta} -L \|x_k - u_k^i\| >          - \gamma\frac{\tilde\alpha_{k+1}^i}{\theta}  - L \frac{\tilde\alpha_{k+1}^i}{\theta},\\
	&& \nabla f(x_k)^T e_i < \phantom{-}\gamma\frac{\tilde\alpha_{k+1}^i}{\theta} +L \|x_k - v_k^i\| < \phantom{-}\gamma\frac{\tilde\alpha_{k+1}^i}{\theta}  + L \frac{\tilde\alpha_{k+1}^i}{\theta}.
\end{eqnarray*}
Hence
\[
|\nabla f(x_k)^T e_i| < (\gamma+L)\frac{\tilde\alpha_{k+1}^i}{\theta},  
\]
so that we can write
\[
\|\nabla f(x_k)\| \leq \sqrt{n}\frac{\gamma + L}{\theta}\max_{i=1,\dots,n}\{\tilde\alpha_{k+1}^i\}
\]
concluding the proof.
\end{proof}

Now, we prove that the sequences of initial stepsizes $\tilde\alpha_k^i$, $i=1,\dots,n$, are all convergent to zero.

\begin{proposition}\label{prop:tildealphatozero}
Assume that $f$ is bounded from below. Then, the LAM framework produces sequences $\{\tilde\alpha_k^i\}$, $i=1,\dots,n$, such that
\[
\lim_{k\to\infty}\max_{i=1,\dots,n}\{\tilde\alpha_k^i\} = 0
\]
\end{proposition}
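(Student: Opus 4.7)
The plan is to combine two standard ingredients: a per-call sufficient decrease whenever the linesearch succeeds, together with the geometric contraction by factor $\theta\in(0,1)$ applied when the linesearch fails. First I would establish that whenever $\alpha_k^i>0$, one has
\[
f(y_k^{i+1}) \le f(y_k^i) - \gamma(1-\delta)^2 (\tilde\alpha_{k+1}^i)^2.
\]
For the standard DF-Linesearch this is immediate from Step~2 or Step~3, whether or not the while loop at Step~5 ever executes. For the new DF-Linesearch I would chain the decreases along the extrapolated iterates $y+\alpha_0\hat d,\ y+(\alpha_0/\delta)\hat d,\ldots,\ y+\alpha^*\hat d$: Step~2/3 contributes a decrease of $\gamma\alpha_0^2$ relative to $f(y)$, and each round of the while loop in Step~5 contributes an additional $\gamma(1-\delta)^2\alpha_j^2$ relative to the previous iterate (using the identity $(1/\delta-1)\alpha_{j-1}=(1-\delta)\alpha_j$). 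Keeping only the final term and writing $\alpha^*=\tilde\alpha_{k+1}^i$ yields the bound above. Failed calls give $y_k^{i+1}=y_k^i$ and contribute $0$. Telescoping across the $n$ inner steps and over $k$, and using that $f$ is bounded from below, then yields
\[
\sum_{k=0}^{\infty}\sum_{i\,:\,\alpha_k^i>0}(\tilde\alpha_{k+1}^i)^2 \le \frac{f(x_0)-\inf f}{\gamma(1-\delta)^2} < \infty.
\]

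Next I would translate this summability into $M_k:=\max_{i=1,\dots,n}\tilde\alpha_k^i\to 0$. Fix $\epsilon>0$; by the convergent series above, there is some $K_\epsilon$ such that $\tilde\alpha_{k+1}^i<\epsilon$ for every $k\ge K_\epsilon$ and every index $i$ at which the linesearch succeeded. On the remaining indices the failure branch gives $\tilde\alpha_{k+1}^i=\theta\bar\alpha_k^i\le\theta M_k$, so for $k\ge K_\epsilon$,
\[
M_{k+1} \le \max\{\epsilon,\ \theta M_k\}.
\]
Iterating this one-step recursion with $\theta\in(0,1)$ forces $M_k$ to fall below $\epsilon$ after at most $\lceil \log(M_{K_\epsilon}/\epsilon)/\log(1/\theta)\rceil$ further iterations and to remain $\le\epsilon$ thereafter; hence $\limsup_k M_k\le\epsilon$. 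Since $\epsilon>0$ was arbitrary, $\lim_k M_k=0$, which is exactly the claim.

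\textbf{Main obstacle.} The delicate point is the uniform per-call sufficient decrease for the new DF-Linesearch, because its acceptance test in Step~5 compares consecutive extrapolated iterates instead of going all the way back to $y$. The telescoping of the chain of inequalities along the while loop must be carried out so that one does not lose the $\gamma(1-\delta)^2(\tilde\alpha_{k+1}^i)^2$ term; concretely, it is necessary to keep the contribution associated with the last (largest) step $\alpha^*$ rather than, say, only the initial Step~2/3 contribution expressed in $\alpha_0$. Once this uniform decrease in terms of the returned stepsize is secured for both linesearch variants, the rest of the proof reduces to the summability-plus-contraction argument outlined above.
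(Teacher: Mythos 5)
Your argument is correct, and while it rests on the same two ingredients as the paper's proof (sufficient decrease on successful linesearch calls, geometric contraction by $\theta$ on failures), it organizes them differently. Your first step --- the uniform per-call decrease $f(y_k^{i+1})\le f(y_k^i)-\gamma(1-\delta)^2(\tilde\alpha_{k+1}^i)^2$, obtained for the new linesearch by chaining the consecutive-point decreases and keeping the last term --- is essentially the paper's inequalities \gzit{suffdeccond_1}--\gzit{suffdeccond}, except that you telescope over all successful inner indices $i$ and over $k$ to get full summability of $\sum_k\sum_i(\tilde\alpha_{k+1}^i)^2$, whereas the paper only extracts one term per iteration and concludes $\alpha_k^i\to 0$ along the successful subsequence $K_2$ for each fixed $i$. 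The real divergence is in the treatment of failures: the paper argues coordinate-by-coordinate, tracing each $k\in K_1$ back to the last successful iteration $m_k$ and writing $\tilde\alpha_{k+1}^i=\theta^{k-m_k+1}\bar\alpha_{m_k}^i$, then splitting into the cases $K_2$ infinite or finite; you instead work directly with $M_k=\max_i\tilde\alpha_k^i$ and the one-step recursion $M_{k+1}\le\max\{\epsilon,\theta M_k\}$. Your version is arguably more robust: the coupling $\bar\alpha_k^i=\max\{\tilde\alpha_k^i,c\max_j\tilde\alpha_k^j\}$ means that during a run of failures the step for coordinate $i$ need not contract purely as $\theta^{(\cdot)}\tilde\alpha^i$ (it can be refreshed by another coordinate's larger step), a subtlety that the paper's identity $\tilde\alpha_{k+1}^i=\theta^{k-m_k+1}\bar\alpha_{m_k}^i$ glosses over but that your max-recursion absorbs automatically, since $\bar\alpha_k^i\le M_k$ always. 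The price you pay is having to run the $\epsilon$-argument (choose $K_\epsilon$, iterate the recursion, let $\epsilon\downarrow 0$) rather than reading off the limit along each subsequence directly; both routes land on the same conclusion.
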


\begin{proof}
For each index $i$, we split the set of iteration indices $\{0,1,2,\dots\}$ into the two subsets $K_1$ and $K_2$, namely
\begin{itemize}
\item[i)] $k\in K_1$ when $\alpha_k^i = 0$, $\tilde\alpha_{k+1}^i = \theta\bar\alpha_k^i$;
\item[ii)] $k\in K_2$ when $\alpha_k^i\neq 0$, $\tilde\alpha_{k+1}^i = \alpha_k^i \geq \bar\alpha_k^i$.
\end{itemize}
Note that the sets $K_1$ and $K_2$ cannot be both finite. When $k\in K_2$, by the DF-Linesearch procedure definition, we have,
for the standard LAM 
\begin{equation}\label{suffdeccond_1}
%\begin{split}
f(x_{k+1})  \leq f(y_k^i + \alpha_k^id_{k+1}^i) \leq f(y_k^i) - \gamma(\alpha_k^i)^2 \leq f(x_k)- \gamma(\alpha_k^i)^2
%\end{split}
\end{equation}
and for the new LAM\begin{equation}\label{suffdeccond}
\begin{split}
f(x_{k+1}) &\leq f(y_k^i + \alpha_k^id_{k+1}^i) \\
    &\leq f(y_k^i+\delta\alpha_k^i d_{k+1}^i) - \gamma(1-\delta)^2(\alpha_k^i)^2 \leq f(x_k)- \gamma(1-\delta)^2(\alpha_k^i)^2.
\end{split}
\end{equation}
Taking into account the boundedness assumption on $f$, it follows from (\ref{suffdeccond_1}) and (\ref{suffdeccond}) that $\{f(x_k)\}$ tends to a limit $\bar f$ and we obtain that
\begin{eqnarray}
&& \lim_{k\to\infty,k\in K_2} \alpha_k^i = 0, \label{alphaK2}\\
&& \lim_{k\to\infty,k\in K_2} \bar\alpha_k^i = 0.\label{baralphaK2} 
\end{eqnarray}
Then, since for all $k\in K_1$, $\alpha_k^i = 0$ by definition, we have that
\begin{equation}\label{allalphatozero}
\lim_{k\to\infty} \alpha_k^i = 0.
\end{equation}
Now, as concerns the $\tilde\alpha_k^i$'s, if $K_2$ is an infinite subset, from (\ref{allalphatozero}) and the fact that $\tilde\alpha_{k+1}^i = \alpha_k^i$, we obtain
\begin{equation}\label{tildealphaK2}
\lim_{k\to\infty,k\in K_2} \tilde\alpha_{k+1}^i = 0.
\end{equation}
On the other hand, let us suppose that $K_1$ is infinite. For each $k\in K_1$, let $m_k$ be the biggest index such that $m_k < k$ and $m_k\in K_2$. Then, we have
\begin{equation}\label{relmk}
 \tilde\alpha_{k+1}^i = \theta^{k-m_k+1}\bar\alpha_{m_k}^i
\end{equation}
and we assume that $m_k = 0$ when $K_2 = \emptyset$. Now, as $k\to\infty$, $k\in K_1$, either $K_2$ is infinite too, implying that $m_k\to\infty$, or $K_2$ is finite, implying that $k-m_k+1\to\infty$. Hence, if $K_1$ is infinite,  (\ref{relmk}) along with (\ref{baralphaK2}) or the fact that $\theta\in (0,1)$, yields that
\begin{equation}\label{tildealphaK1}
\lim_{k\to\infty,k\in K_1}\tilde\alpha_{k+1}^i = 0.
\end{equation}
The proof is concluded recalling   (\ref{tildealphaK2}) and (\ref{tildealphaK1}).
\end{proof}

\begin{corollary}\label{corollary:convergence}
Suppose that Assumption \ref{gradlip} holds and that $f$ is bounded from below. Then, LAM produces an infinite sequence $\{x_k\}$ such that
\[
\lim_{k\to\infty} \|\nabla f(x_k)\| = 0.
\]
\end{corollary}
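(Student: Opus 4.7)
The corollary should follow almost immediately by combining the two preceding results. The plan is to use Proposition \ref{bound_grad} to control $\|\nabla f(x_k)\|$ in terms of $\max_{i}\tilde\alpha_{k+1}^i$ uniformly in $k$, and then invoke Proposition \ref{prop:tildealphatozero} to drive this upper bound to zero.

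More concretely, first I would note that for every iteration $k$, Proposition \ref{bound_grad} gives a bound of the form
\[
\|\nabla f(x_k)\| \leq C \max_{i=1,\dots,n}\{\tilde\alpha_{k+1}^i\},
\]
where the constant $C$ can be taken as the larger of the two constants appearing in the two cases, namely
\[
C = \sqrt{n}\,\max\!\left\{\frac{\gamma + L(\sqrt n + 1)}{\min\{\theta,\delta\}},\ \frac{\gamma + L}{\theta}\right\}.
\]
This uniform bound holds irrespective of whether $x_{k+1}=x_k$ or $x_{k+1}\neq x_k$, so no case distinction propagates further.

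Next I would apply Proposition \ref{prop:tildealphatozero}, which guarantees that $\max_{i=1,\dots,n}\{\tilde\alpha_{k+1}^i\}\to 0$ as $k\to\infty$, and conclude by the squeeze principle that $\|\nabla f(x_k)\|\to 0$. The fact that the sequence $\{x_k\}$ is infinite is implicit in the algorithmic description (the outer For loop runs over all $k=0,1,2,\dots$) and in the fact that Proposition \ref{prop:tildealphatozero} produces a limit as $k\to\infty$. There is no real obstacle here: the work has already been done in the two preceding propositions, and the corollary is a routine combination of them.
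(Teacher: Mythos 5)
Your argument is correct and is exactly the paper's proof: the paper simply states that the corollary ``easily follows recalling Proposition \ref{bound_grad} and Proposition \ref{prop:tildealphatozero}'', which is precisely the combination (uniform constant from the two cases of Proposition \ref{bound_grad}, then the limit from Proposition \ref{prop:tildealphatozero}) that you spell out. No gaps.
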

\begin{proof}
The proof easily follows recalling Proposition \ref{bound_grad} and Proposition \ref{prop:tildealphatozero}.
\end{proof}

{\bf Remark} {\em Note that the result of Proposition \ref{prop:tildealphatozero} is somewhat stronger than analogous results for GPS \cite{torczon:97} and MADS-type \cite{audet:06} algorithms. Indeed, for those algorithms it is only possible to show that a subsequence of the stepsizes converges to zero. As a consequence, also the result of Corollary \ref{corollary:convergence} is stronger in that it states that every limit point of the sequence of iterates is stationary.}

\section{Complexity bound for {\rm LAM} in the non-convex case}\label{sec:complexity}

\begin{proposition}
 Suppose that Assumption \ref{gradlip} holds and that $f$ is bounded from below. Given any $\epsilon\in(0,1)$, assume that $\bar\jmath+1$ is the first iteration such that $\|\nabla f(x_{\bar\jmath})\|\leq\epsilon$, i.e. 
 $\|\nabla f(x_k)\| > \epsilon$, for all $k=0,1,\dots,\bar\jmath$. Then,
\begin{equation}\label{bound_on_iterations}
   \bar\jmath \leq \left\lceil\frac{n c_1^2 (\Phi_0 - f_{\min})}{\tilde c}\eps^{-2}\right\rceil = {\cal O}(\epsilon^{-2}),
\end{equation}
where
\begin{eqnarray}
 c_1 &=&\max \left\{ \left(\displaystyle\frac{\gamma+L(\sqrt{n}+1)}{\min\{\theta,\delta\}} \right), \ \left(\displaystyle\frac{\gamma+L}{\theta} \right)
 \right \},\label{c1def}\\
 \tilde c & = & \min\left\{\gamma c^2,\gamma\left(1-\displaystyle\frac{c^2}{2}\right)\right\}.\label{ctildedef}
\end{eqnarray}
\end{proposition}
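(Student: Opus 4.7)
The plan is to combine the gradient bound of Proposition~\ref{bound_grad} with a suitable merit function that absorbs the stepsize decrease, and then telescope the resulting inequality. First I would invoke Proposition~\ref{bound_grad}: both of its bounds can be written as $\|\nabla f(x_k)\| \leq \sqrt{n}\,c_1 \max_i \tilde\alpha_{k+1}^i$ with $c_1$ as in (\ref{c1def}), so the hypothesis $\|\nabla f(x_k)\| > \epsilon$ for $k=0,\ldots,\bar\jmath$ translates into the uniform lower bound
\[
T_{k+1} := \max_{i=1,\ldots,n} \tilde\alpha_{k+1}^i \;>\; \frac{\epsilon}{\sqrt{n}\,c_1}, \qquad k=0,\ldots,\bar\jmath.
\]

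Next I would introduce the merit function $\Phi_k := f(x_k) + \tfrac{\gamma c^2}{2}\,T_k^2$ and aim to establish the per-iteration descent $\Phi_k - \Phi_{k+1} \geq \tilde c\, T_{k+1}^2$ by a case split. Suppose iteration $k$ is successful ($x_{k+1}\neq x_k$) and let $i^{\ast}$ attain $T_{k+1}$. When $\alpha_k^{i^{\ast}} > 0$, one has $T_{k+1} = \alpha_k^{i^{\ast}}$ and the sufficient-decrease conditions (\ref{suffdeccond_1}) or (\ref{suffdeccond}) give $f(x_k)-f(x_{k+1}) \geq \gamma\, T_{k+1}^2$; combining this with $\tfrac{\gamma c^2}{2}(T_k^2 - T_{k+1}^2) \geq -\tfrac{\gamma c^2}{2}T_{k+1}^2$ yields $\Phi_k-\Phi_{k+1}\geq \gamma(1-c^2/2)T_{k+1}^2$. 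When $\alpha_k^{i^{\ast}} = 0$, instead, $T_{k+1} = \theta\,\bar\alpha_k^{i^{\ast}} \leq \theta T_k$, hence $T_k \geq T_{k+1}$, and since $x_{k+1}\neq x_k$ at least one index $i$ has $\alpha_k^i\geq\bar\alpha_k^i\geq cT_k$; the sufficient-decrease condition applied to that index produces $f(x_k)-f(x_{k+1})\geq\gamma c^2 T_k^2\geq \gamma c^2 T_{k+1}^2$, and the nonnegative quadratic contribution then yields $\Phi_k-\Phi_{k+1}\geq\gamma c^2\,T_{k+1}^2$. When iteration $k$ is unsuccessful ($x_{k+1}=x_k$), all linesearches fail, so $T_{k+1}=\theta T_k$ exactly and the descent comes entirely from the quadratic term: $\Phi_k-\Phi_{k+1}=\tfrac{\gamma c^2}{2}(\theta^{-2}-1)T_{k+1}^2$.

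Having established the uniform descent in all three subcases, summing for $k=0,\ldots,\bar\jmath$ telescopes to
\[
\Phi_0 - f_{\min} \;\geq\; \Phi_0 - \Phi_{\bar\jmath+1} \;\geq\; \tilde c\,\sum_{k=0}^{\bar\jmath} T_{k+1}^2 \;>\; \tilde c\,(\bar\jmath+1)\,\frac{\epsilon^2}{n c_1^2},
\]
and rearranging and rounding up produces the claimed ${\cal O}(\epsilon^{-2})$ bound. The main obstacle is calibrating the weight $\lambda=\gamma c^2/2$ in $\Phi_k$ so that the single constant $\tilde c$ defined in (\ref{ctildedef}) covers all subcases simultaneously: the choice of $\lambda$ is forced by balancing the $f$-decrease gain $\gamma T_{k+1}^2$ against the potential loss $-\lambda T_{k+1}^2$ in the first successful subcase (producing the $\gamma(1-c^2/2)$ term), while in the unsuccessful case one must still verify that $\tfrac{\gamma c^2}{2}(\theta^{-2}-1)\geq\tilde c$ so that the quadratic term alone provides enough descent. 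The argument extends to the ``new'' LAM by simply replacing $\gamma$ with $\gamma(1-\delta)^2$ in the relevant sufficient-decrease inequalities, and adjusting $\tilde c$ accordingly.
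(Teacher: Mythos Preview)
Your approach is essentially the same as the paper's: you use the identical merit function $\Phi_k = f(x_k) + \tfrac{\gamma c^2}{2}T_k^2$, the same gradient bound from Proposition~\ref{bound_grad} to get $T_{k+1} > \epsilon/(c_1\sqrt n)$, and the same telescoping argument. The only structural difference is your case split for successful iterations: you branch on whether the index $i^\ast$ realizing $T_{k+1}$ had $\alpha_k^{i^\ast}>0$ or $\alpha_k^{i^\ast}=0$, whereas the paper branches on the trichotomy $T_{k+1}\gtreqless T_k$. Your two-case split is a slight streamlining---your case ``$\alpha_k^{i^\ast}>0$'' covers the paper's Cases~1 and~2 in one stroke (yielding the $\gamma(1-c^2/2)$ constant), and your case ``$\alpha_k^{i^\ast}=0$'' is contained in the paper's Case~3 (yielding $\gamma c^2$)---so the resulting $\tilde c$ is the same.

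Two remarks worth noting. First, the verification you flag for the unsuccessful branch, namely $\tfrac{\gamma c^2}{2}(\theta^{-2}-1)\geq\tilde c$, is not automatic for arbitrary $\theta\in(0,1)$; the paper's proof writes the same unsuccessful-case constant and then takes $\tilde c$ as the minimum over the \emph{successful} subcases only, so this point is treated identically (and equally loosely) in both arguments. Second, your closing sentence about the new LAM---replacing $\gamma$ by $\gamma(1-\delta)^2$ in the relevant decrease inequality---is the honest reading of (\ref{suffdeccond}); the paper instead asserts in its Case~2 that $f(y_{k-1}^{\bar\jmath+1})\leq f(y_{k-1}^{\bar\jmath})-\gamma(\tilde\alpha_k^{\bar\jmath})^2$ holds ``whichever version of LAM we consider,'' which for the new linesearch does not follow directly from the while-loop condition once expansions have occurred. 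So on this point your proposal is, if anything, more careful than the paper's own argument.
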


\begin{proof}
For all iterations $k=0,1,\dots,\bar\jmath$, by Proposition \ref{bound_grad}, we have that
\begin{equation}\label{bound_maxalpha}
 \epsilon < \|\nabla f(x_k)\| \leq c_1\sqrt{n}\max_{i=1,\dots,n}\{\tilde\alpha^i_{k+1}\}.
\end{equation}
% where 
% \[
% c_1 = \left\{\begin{array}{ll}
%   \left(\displaystyle\frac{\gamma+L(\sqrt{n}+1)}{\min\{\theta,\delta\}} \right) & \mbox{if the iteration is of success,}\\
%   \displaystyle\frac{\gamma + L}{\theta} & \mbox{if the iteration is of failure,}
%   \end{array}\right.
% \]
Hence,
\begin{equation}\label{bound_maxalpha1}
 \max_{i=1,\dots,n}\{\tilde\alpha^i_{k+1}\} \geq \frac{\epsilon}{c_1\sqrt{n}},
\end{equation}
% If $k$ is an ``unsuccessful'' iteration, i.e. such that $x_{k+1} = x_{k}$, we have that 
% \[
%  \max_{i=1,\dots,n}\{\tilde\alpha_{k+1}^i\} = \theta\max_{i=1,\dots,n}\{\tilde\alpha_{k}^i\}.
% \]

Let us introduce, 
\[
\Phi_k = f(x_k) + \frac{1}{2}c^2\gamma \max_{i=1,\dots,n}\{\tilde\alpha_k^i\}^2
\]
and note that, since $f$ is bounded from below,
\begin{equation}\label{eq:lowerbound_Phi}
\Phi_k \geq f_{\min}.
\end{equation}
If the $(k-1)$-th iteration is of failure, then
\begin{itemize}
	\item[i)] $f(x_{k}) = f(x_{k-1})$ and
	\item[ii)] $\tilde\alpha_k^i = \theta\bar\alpha_{k-1}^i$, for every $i=1,\dots,n$.
\end{itemize}
In particular, since, for every $i=1,\dots,n$, $\bar\alpha_{k-1}^i = \max\{\tilde\alpha_{k-1}^i,c\max\{\tilde\alpha_{k-1}^i\}\}$, we have either
\[
   \tilde\alpha_k^i = \theta\tilde\alpha_{k-1}^i \leq \theta  \max\{\tilde\alpha_{k-1}^i\}
\] 
or
\[
  \tilde\alpha_k^i = \theta c \max\{\tilde\alpha_{k-1}^i\} \leq \theta  \max\{\tilde\alpha_{k-1}^i\}.
\]
Hence, in case of failure, we can write $\tilde\alpha_k^i \leq \theta  \max\{\tilde\alpha_{k-1}^i\}$, so that \\ $\max\{\tilde\alpha_k^i\}\leq\theta  \max\{\tilde\alpha_{k-1}^i\}$. Then, from
\[
\Phi_k - \Phi_{k-1} = f(x_k)  - f(x_{k-1}) + \frac{1}{2}c^2\gamma\left( (\max_{i=1,\dots,n}\{\tilde\alpha_k^i\})^2 - (\max_{i=1,\dots,n}\{\tilde\alpha_{k-1}^i\})^2\right)
\]
and
\[
-\frac{1}{\theta^2}\max\{\tilde\alpha_k^i\}^2\geq  -\max\{\tilde\alpha_{k-1}^i\}^2
\]
we obtain
\[
\Phi_k - \Phi_{k-1} =  \frac{1}{2}c^2\gamma\left( \max_{i=1,\dots,n}\{\tilde\alpha_k^i\}^2 - \max_{i=1,\dots,n}\{\tilde\alpha_{k-1}^i\}^2\right) \leq 
\frac{1}{2}c^2\gamma\left( \max_{i=1,\dots,n}\{\tilde\alpha_k^i\}^2 -\frac{1}{\theta^2}\max\{\tilde\alpha_k^i\}^2\right),
\]
so that
\[
\Phi_k - \Phi_{k-1} \leq 
-\frac{1}{2}c^2\gamma\left(  \frac{1 - \theta^2}{\theta^2}\right) \max_{i=1,\dots,n}\{\tilde\alpha_k^i\}.
\]
Let us now consider the case when $(k-1)$-th iteration is of ``success'', i.e. $x_k\neq x_{k-1}$ so that $f(x_k) < f(x_{k-1})$. Then, we consider the following three cases:
\begin{enumerate}
	\item $\displaystyle\max_{i=1,\dots,n}\{\tilde\alpha_k^i\} = \displaystyle\max_{i=1,\dots,n}\{\tilde\alpha_{k-1}^i\}$;
	
	\item $\displaystyle\max_{i=1,\dots,n}\{\tilde\alpha_k^i\} > \displaystyle\max_{i=1,\dots,n}\{\tilde\alpha_{k-1}^i\}$;
	
	\item $\displaystyle\max_{i=1,\dots,n}\{\tilde\alpha_k^i\} < \displaystyle\max_{i=1,\dots,n}\{\tilde\alpha_{k-1}^i\}$.
	
\end{enumerate}

\underline{Case 1}. Since the $(k-1)$-th iteration is of success, there is an index $i$ such that, whichever version of LAM we consider, the following holds
\[
  f(y_{k-1}^{i+1}) \leq f(y_{k-1}^i) - \gamma(\bar\alpha_{k-1}^i)^2
\]
since in both versions of LAM the extrapolation cycle (in both the DF-Linesearch procedures) gets started when
\[
  f(y_{k-1}^i + \bar\alpha_{k-1}^i\hat d) \leq f(y_{k-1}^i) - \gamma(\bar\alpha_{k-1}^i)^2.
\]
Then, considering that
\[
  \bar\alpha_{k-1}^i = \max\{\tilde\alpha_{k-1}^i,c\max_{j=1,\dots,n}\{\tilde\alpha_{k-1}^j\}\} \geq c\max_{j=1,\dots,n}\{\tilde\alpha_{k-1}^j\},
\]
we have
\[
f(y_{k-1}^{i+1}) \leq f(y_{k-1}^i) - \gamma(\bar\alpha_{k-1}^i)^2 \leq f(y_{k-1}^i) - \gamma c^2\max_{j=1,\dots,n}\{\tilde\alpha_{k-1}^j\}^2.
\]
Then, recalling that we are in case 1, 
\[
   f(y_{k-1}^{i+1}) \leq f(y_{k-1}^i) - \gamma c^2\max_{j=1,\dots,n}\{\tilde\alpha_{k}^j\}^2.
\]
Moreover, since by definition $f(x_k)\leq f(y_{k-1}^{n+1})$ and $f(y_{k-1}^i) \leq f(x_{k-1})$, we can write
\[
f(x_k) \leq f(y_{k-1}^{i+1}) \leq f(y_{k-1}^i) - \gamma c^2\max_{j=1,\dots,n}\{\tilde\alpha_{k}^j\}^2 \leq f(x_{k-1}) - \gamma c^2\max_{j=1,\dots,n}\{\tilde\alpha_{k}^j\}^2.
\]
Then, we have
\[
\Phi_k - \Phi_{k-1} = f(x_k) - f(x_{k-1}) \leq - \gamma c^2\max_{i=1,\dots,n}\{\tilde\alpha_{k}^i\}^2.
\]

\underline{Case 2}. Since $\displaystyle\max_{i=1,\dots,n}\{\tilde\alpha_k^i\} > \displaystyle\max_{i=1,\dots,n}\{\tilde\alpha_{k-1}^i\}$, we have that an index $\bar\jmath$ exists such that
\[
\max_{i=1,\dots,n}\{\tilde\alpha_{k}^i\} = \tilde\alpha_{k}^{\bar\jmath}
\]
% and it results: for $j=\bar\jmath$
% \[
% \bar\alpha_{k-1}^{\bar\jmath} = \tilde\alpha_{k-1}^{\bar\jmath} 
% \]
and a linesearch has been performed along the $\bar\jmath$-th direction, so that, whichever version of LAM we consider that
\[
  f(y^{\bar\jmath +1}_{k-1}) \leq f(y^{\bar\jmath}_{k-1}) -\gamma(\tilde\alpha_{k}^{\bar\jmath})^2.
\]
Hence, 
\[
f(x_k) - f(x_{k-1}) \leq -\gamma(\tilde\alpha_{k}^{\bar\jmath})^2,
\]
so that
\begin{eqnarray*}
\Phi_k - \Phi_{k-1} && = f(x_k) - f(x_{k-1}) +\frac{1}{2}c^2\gamma\left(\max_{i=1,\dots,n}\{\tilde\alpha_k^i\}^2 - \max_{i=1,\dots,n}\{\tilde\alpha_{k-1}^i\}^2\right) \\
  && \leq -\gamma \max_{i=1,\dots,n}\{\tilde\alpha_k^i\}^2 +\frac{1}{2}c^2\gamma\left(\max_{i=1,\dots,n}\{\tilde\alpha_k^i\}^2 - \max_{i=1,\dots,n}\{\tilde\alpha_{k-1}^i\}^2\right) \\
  && \leq -\gamma \max_{i=1,\dots,n}\{\tilde\alpha_k^i\}^2 +\frac{1}{2}c^2\gamma\max_{i=1,\dots,n}\{\tilde\alpha_k^i\}^2 \\
  && \leq -\gamma \left(1 - \frac{c^2}{2}\right)\max_{i=1,\dots,n}\{\tilde\alpha_k^i\}^2.
\end{eqnarray*}

\underline{Case 3}.
In this case, we know that an index $\bar\imath$ exists such that
\[
  \tilde\alpha_k^{\bar\imath} \geq \bar\alpha_{k-1}^{\bar\imath} = \max\{\tilde\alpha_{k-1}^{\bar\imath}, c\max_{i=1,\dots,n}\{\tilde\alpha_{k-1}^i\}\}\geq c\max_{i=1,\dots,n}\{\tilde\alpha_{k-1}^i\}.
\]
Then, we can write (recalling that the iteration is of success and whichever version of LAM we consider)
\[
f(x_k) \leq f(x_{k-1}) -\gamma c^2\max_{i=1,\dots,n}\{\tilde\alpha_{k-1}^i\}^2.
\]
Hence, we have
\begin{eqnarray*}
\Phi_k - \Phi_{k-1} &=& f(x_k) - f(x_{k-1}) + \gamma \frac{c^2}{2} \max_{i=1,\dots,n}\{\tilde\alpha_{k}^i\}^2 - \gamma \frac{c^2}{2} \max_{i=1,\dots,n}\{\tilde\alpha_{k-1}^i\}^2  \\
  &\leq& -\gamma c^2\max_{i=1,\dots,n}\{\tilde\alpha_{k-1}^i\}^2 + \gamma \frac{c^2}{2} \left(\max_{i=1,\dots,n}\{\tilde\alpha_{k}^i\}^2 - \max_{i=1,\dots,n}\{\tilde\alpha_{k-1}^i\}^2 \right) \\
  &=& - \gamma c^2 \max_{i=1,\dots,n}\{\tilde\alpha_{k}^i\}^2+  \gamma {c^2} \left(\max_{i=1,\dots,n}\{\tilde\alpha_{k}^i\}^2 - \max_{i=1,\dots,n}\{\tilde\alpha_{k-1}^i\}^2 \right) \\
  && \quad + \gamma \frac{c^2}{2} \left(\max_{i=1,\dots,n}\{\tilde\alpha_{k}^i\}^2 - \max_{i=1,\dots,n}\{\tilde\alpha_{k-1}^i\}^2 \right)\\
  &=& - \gamma c^2 \max_{i=1,\dots,n}\{\tilde\alpha_{k}^i\}^2+  
   \frac{3}{2}\gamma c^2 \left(\max_{i=1,\dots,n}\{\tilde\alpha_{k}^i\}^2 - \max_{i=1,\dots,n}\{\tilde\alpha_{k-1}^i\}^2 \right)\\
   &< &- \gamma c^2 \max_{i=1,\dots,n}\{\tilde\alpha_{k}^i\}^2,
\end{eqnarray*}
where the last inequality follows from the fact that we are in case 3.
Hence, recalling the  above three cases, for all $k$ we can always write
\begin{equation}\label{eq:bound_Phikkm1}
\Phi_{k} -\Phi_{k-1} \leq -\tilde c \max_{i=1,\dots,n}\{\tilde\alpha_{k}^i\}^2,
\end{equation}
where $\tilde c$ is defined in (\ref{ctildedef}).
Then, considering that 
\[
\Phi_{\bar\jmath+1} - \Phi_{0} =  (\Phi_{\bar\jmath+1} - \Phi_{\bar\jmath }) + (\Phi_{\bar\jmath } - \Phi_{\bar\jmath - 1}) + \dots + (\Phi_{1} - \Phi_0) 
\]
and recalling (\ref{eq:bound_Phikkm1}) we can write
\[
\Phi_{\bar\jmath+1} - \Phi_{0} \leq  - \tilde c \sum_{k=1}^{\bar\jmath+1}\max_{i=1,\dots,n}\{\tilde\alpha_k^i\}^2 = - \tilde c \sum_{k=0}^{\bar\jmath}\max_{i=1,\dots,n}\{\tilde\alpha_{k+1}^i\}^2.
\]
By recalling (\ref{eq:lowerbound_Phi}), we can write
\begin{equation}\label{eq:bound_diff_Phi}
  f_{\min} -\Phi_0 \leq \Phi_{\bar\jmath} - \Phi_{0}\leq - \tilde c \sum_{k=0}^{\bar\jmath}\max_{i=1,\dots,n}\{\tilde\alpha_{k+1}^i\}^2.
\end{equation}
Now, by (\ref{bound_maxalpha1}), we have 
% let us recall that, under the given assumptions, 
% \[
%   \eps < \|\nabla f(x_k)\|,\quad\forall\ k=0,1,\dots,\bar\jmath.
% \]
% Then, by Proposition \ref{bound_grad}, we also have that
% \[
%   \eps < \|\nabla f(x_k)\|\leq c_1\sqrt{n}\max_{i=1,\dots,n}\{\tilde\alpha_{k+1}^i\},\quad\forall\ k=0,1,\dots,\bar\jmath.
% \]
% Thus,
\[
\max_{i=1,\dots,n}\{\tilde\alpha_{k+1}^i\}^2 \geq \frac{\eps^2}{c_1^2n},\quad\mbox{for}\ k=0,1,\dots,\bar\jmath,
\]
and, from (\ref{eq:bound_diff_Phi}), we can write
\[
 \Phi_0 - f_{\min} \geq \tilde c \sum_{k=0}^{\bar\jmath}\max_{i=1,\dots,n}\{\tilde\alpha_{k+1}^i\}^2 \geq (\bar\jmath+1) \tilde c\frac{\eps^2}{n c_1^2}.
\]
Thus, the number of iterations $\bar\jmath$ can be bounded from above by
\begin{equation*}%\label{bound_on_iterations}
   \bar\jmath \leq \left\lceil\frac{n c_1^2 (\Phi_0 - f_{\min})}{\tilde c}\eps^{-2}\right\rceil= {\cal O}(\eps^{-2})
\end{equation*}
which concludes the proof.
\end{proof}

Now, we prove the worst case complexity bound for the number of function evaluations.

\begin{proposition}
Suppose that Assumption \ref{gradlip} holds and that $f$ is bounded from below. Given any $\epsilon\in(0,1)$, assume that $\bar\jmath+1$ is the first iteration such that $\|\nabla f(x_{\bar\jmath})\|\leq\epsilon$, i.e. 
 $\|\nabla f(x_k)\| > \epsilon$, for all $k=0,1,\dots,\bar\jmath$. Then, the number of function evaluations $Nf$ required by the LAM in the worst case is such that 
 \[
  Nf \leq {\cal O}(\epsilon^{-2}).
 \]
\end{proposition}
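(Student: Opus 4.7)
The plan is to count function evaluations per DF-Linesearch call, separate a constant overhead from the variable extrapolation cost, and then bound the total number of extrapolations using the boundedness of $f$. At iteration $k$ and coordinate $i$, Steps~2--3 cost at most $2$ evaluations (to either enter Step~5 or return with $\alpha=0$); if the while-loop of Step~5 is entered, letting $j_k^i$ denote the number of successful loop iterations, the loop performs $j_k^i+1$ additional evaluations (the final failing test). Summing, the $i$-th call at iteration $k$ uses at most $j_k^i+3$ evaluations, so
\[
  Nf \;\leq\; 3n(\bar\jmath+1) + \sum_{k=0}^{\bar\jmath}\sum_{i=1}^n j_k^i.
\]
Since $\bar\jmath={\cal O}(\epsilon^{-2})$ by the previous proposition, the overhead term is already ${\cal O}(\epsilon^{-2})$ and the only task is to show $\sum_{k,i} j_k^i = {\cal O}(\epsilon^{-2})$.

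The key uniform lower bound I would use is $\bar\alpha_k^i \geq c\epsilon/(c_1\sqrt{n})$ for all $k=1,\dots,\bar\jmath$, obtained by combining $\bar\alpha_k^i\geq c\max_j\tilde\alpha_k^j$ with Proposition~\ref{bound_grad} applied at iteration $k-1$ (where the gradient still exceeds $\epsilon$). The initial iteration $k=0$ is treated separately: $\bar\alpha_0^i$ is an $\epsilon$-independent positive constant, and since the per-call decrease of $f$ is itself bounded by $f(x_0)-f_{\min}$, this forces $j_0^i$ to be an $\epsilon$-independent constant as well, contributing only ${\cal O}(1)$ to the total.

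For the new LAM the count is immediate. The $l$-th successful test of Step~5 (with current $\alpha=\bar\alpha_k^i/\delta^l$) certifies a decrease between consecutive extrapolated points of at least $\gamma((1-\delta)/\delta)^2 (\bar\alpha_k^i/\delta^l)^2 \geq \gamma(1-\delta)^2(\bar\alpha_k^i)^2$. These decreases telescope along each extrapolation chain and, being all nonnegative, sum across iterations and coordinates. Using $f\geq f_{\min}$ yields
\[
  f(x_0) - f_{\min} \;\geq\; \gamma(1-\delta)^2 \sum_{k=0}^{\bar\jmath}\sum_{i=1}^n j_k^i\,(\bar\alpha_k^i)^2,
\]
and plugging in the lower bound on $\bar\alpha_k^i$ gives $\sum_{k,i} j_k^i = {\cal O}(\epsilon^{-2})$, as required.

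The standard LAM is the main obstacle, because its while-loop condition $f(y+\alpha/\delta\,\hat d) \leq f(y)-\gamma(\alpha/\delta)^2$ compares each candidate to the common base point $f(y)$, so the individual extrapolation inequalities are \emph{not} additive. My workaround is to retain only the final, most aggressive inequality of the chain, $f(y_k^{i+1}) \leq f(y_k^i) - \gamma(\alpha_k^i)^2$ with $\alpha_k^i = \bar\alpha_k^i/\delta^{j_k^i}$. Telescoping in $i$ and $k$ and using $\bar\alpha_k^i \geq c\epsilon/(c_1\sqrt{n})$ produces $\sum_{k,i} \delta^{-2j_k^i} = {\cal O}(\epsilon^{-2})$. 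I then linearize the exponential via $\delta^{-2j}=e^{2j\log(1/\delta)}\geq 1+2j\log(1/\delta)$, which converts the bound on $\sum \delta^{-2j_k^i}$ into $\sum_{k,i} j_k^i = {\cal O}(\epsilon^{-2})$. Substituting into the estimate for $Nf$ closes the argument.
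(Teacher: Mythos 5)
Your proof is correct and follows essentially the same strategy as the paper: a fixed ${\cal O}(n)$ overhead per iteration plus a count of extrapolation steps, the latter bounded by telescoping the sufficient-decrease inequalities and using the lower bound $\bar\alpha_k^i\geq c\,\epsilon/(c_1\sqrt{n})$ derived from Proposition \ref{bound_grad}. The only cosmetic difference is in the standard-LAM case, where you convert $\sum_{k,i}\delta^{-2j_k^i}={\cal O}(\epsilon^{-2})$ into $\sum_{k,i}j_k^i={\cal O}(\epsilon^{-2})$ via $e^x\geq 1+x$, whereas the paper maximizes $\phi(a)=(a+1)\delta^{2a}$ to obtain $h+1\leq\varphi^*\delta^{-2h}$ --- both work, and your explicit handling of the $k=0$ iteration (where the $\epsilon$-dependent lower bound on $\bar\alpha_0^i$ is not available) is in fact slightly more careful than the paper's.
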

\begin{proof}
For all $k=0,1,\dots,\bar\jmath$, by Proposition \ref{bound_grad}, we have that
\begin{equation}\label{bound_maxalpha_success_app}%\label{bound_maxalpha_success_app1}
 \epsilon < \|\nabla f(x_k)\| \leq c_1 \sqrt{n}\max_{i=1,\dots,n}\{\tilde\alpha^i_{k+1}\},
 \end{equation}
where $c_1$ is defined in (\ref{c1def}).

\underline{For the standard LAM}. 
Let $U_{\bar\jmath}$ be the index set of unsuccessful iterations. For every $k\in U_{\bar\jmath}$, the algorithm performs 
\begin{equation}\label{nfku_app}
Nf_k^u = 2n
\end{equation}
function evaluations.

On the other hand, if $k\in S_{\bar\jmath}$ (i.e. $k$ is a successful iteration), we can distinguish the function evaluations performed by the algorithm in those producing a sufficient decrease in the objective function value and those producing a failure, i.e. the last function evaluation performed by the DF-Linesearch procedure. Hence, in this case we can further distinguish the function evaluations as $Nf_k^s$ and $\overline{Nf}_k^s$. As concerns $\overline{Nf}_k^s$ we have
\begin{equation}\label{nfksbar_app}
\overline{Nf}_k^s \leq 2n.
\end{equation}
For every $k\in S_{\bar{j}}$, we can bound the difference $f(x_k)-f(x_{k+1})$ as 
\begin{equation}\label{suff_dec_w_exp_app}
 f(x_{k}) - f(x_{k+1})  \geq \sum_{i \in \tilde{I}_k} \gamma (\delta^{-h_k^i} \bar{\alpha}_k^i)^2 \geq \sum_{i \in \tilde{I}_k} \gamma (\delta^{-h_k^i} c \max_{i = 1, \dots, n}\{\tilde{\alpha}_k^i\})^2,
\end{equation}
where $\tilde{I}_k$ is the set of indices where the sufficient decrease is found, hence where the ``DF-Linesearch" procedure is performed, and $h_k^i \in {\cal{Z_+}}\cup \{0\}$ is the number of expansions performed by the DF-Linesearch and producing sufficient decrease. Using (\ref{bound_maxalpha_success_app}), we can write
\begin{equation}\label{suff_dec_eps_app}
 f(x_{k}) - f(x_{k+1})  \geq \sum_{i \in \tilde{I}_k}\gamma \frac{c^2}{c_1^2n} (\delta^{-h_k^i} \epsilon)^2.
\end{equation}
Summing up over all the successful iterations we get
\begin{equation}\label{bound_exp_app}
 f(x_{0}) - f(x_{\bar{j}})  \geq \gamma \frac{c^2}{c_1^2n} \epsilon^2 \sum_{k \in S_{\bar{j}}} \sum_{i \in \tilde{I}_k} \delta^{-2h_k^i},
\end{equation}
so we can write
\begin{equation}\label{bound_success_app}
 \sum_{k \in S_{\bar{j}}} \sum_{i \in \tilde{I}_k} \delta^{-2h_k^i} \leq \frac{c_1^2 n\left(f(x_{0}) - f(x_{\bar{j}})\right)}{c^2 \gamma} \epsilon^{-2}.
\end{equation}
% For all $i$ and $k$, using  $\delta \leq \frac{1}{\sqrt{2}}$, we have:
% \[
% \delta^{-2h_k^i} = h_k^i + 1 = 1, \quad \text{if }h_k^i=0,
% \]
% \[
% \delta^{-2h_k^i} \geq h_k^i +1, \quad \text{if }h_k^i\geq 1,
% \]
% which implies
Now, let us consider the function of one variable
\[
   \phi(a) = (a+1)\delta^{2a}
\]
and the problem
\[
  \max_{a\geq 0}\phi(a).
\]
The derivative of $\phi$ is
\[
 \phi'(a) = \delta^{2a}\left( 1 + 2(a+1)\ln{\delta}\right).
\]
so that $\phi$ only has the stationary point at  
\[
  a^* = \frac{-2\ln{\delta}-1}{2\ln{\delta}},
\]
and, considering that $\delta\in(0,1)$,
\[
\phi(a^*) = -\frac{1}{2\ln{\delta}}\delta^{\left(-2 -\frac{1}{\ln{\delta}}\right)} > 0.
\]
Since $\phi(a)\to 0$ for $a\to +\infty$ and $\phi(a)\to-\infty$ for $a\to-\infty$,  $a^*$ is the global maximizer of $\phi$ on $\Re$. Hence, $\phi(a^*)\geq \phi(a)$ for all $a\in\Re$ and in particular $\phi(a^*)\geq \phi(0) = 1$. Then, we can write
\[
\phi(a) \leq \max\{1,\phi(a^*)\}=\varphi^*
\]
for $a\geq 0$. Then, for each $k\in S_{\bar\jmath}$ and for every $i\in \tilde I_k$, we have
\[
   h_k^i + 1 \leq \varphi^* \delta^{-2h_k^i},
\]
so that
\begin{equation}\label{h_plus_1_app}
\begin{split}
 Nf^s = \sum_{k\in S_{\bar\jmath}}Nf_k^s & = \sum_{k \in S_{\bar{j}}} \sum_{i \in \tilde{I}_k} (h_k^i + 1) \leq \varphi^*\sum_{k \in S_{\bar{j}}} \sum_{i \in \tilde{I}_k} \delta^{-2h_k^i} \\
 & \leq \varphi^*\frac{c_1^2 n\left(f(x_{0}) - f(x_{\bar{j}})\right)}{c^2 \gamma} \epsilon^{-2}.
\end{split}
\end{equation}
Then, recalling (\ref{nfku_app}), (\ref{nfksbar_app}) and (\ref{h_plus_1_app}), the total number of function evaluations can be bounded by
\[
Nf \leq 2n(|U_{\bar\jmath}|+|S_{\bar\jmath}|) + Nf^s \leq 2n \bar\jmath +\varphi^*\frac{c_1^2 n\left(f(x_{0}) - f(x_{\bar{j}})\right)}{c^2 \gamma} \epsilon^{-2}.
\]
Recalling (\ref{bound_on_iterations}), we finally get
\begin{equation}\label{Nf_standard}
Nf \leq 2n \left\lceil\frac{n c_1^2 (\Phi_0 - f_{\min})}{\tilde c}\eps^{-2}\right\rceil + \left\lceil \varphi^*\frac{nc_1^2 \left(f(x_{0}) - f(x_{\bar{j}})\right)}{c^2 \gamma} \epsilon^{-2}\right\rceil = {\cal O}(\epsilon^{-2}),
\end{equation}
where $c_1$ and $\tilde c$ are defined in (\ref{c1def}) and (\ref{ctildedef}), respectively.
%and the proof is concluded.

\underline{For the new LAM}.
% For all $k=0,1,\dots,\bar\jmath$, from Proposition \ref{bound_grad}, we have that
% \begin{equation}\label{bound_maxalpha_success_app1}
%  \epsilon < \|\nabla f(x_k)\| \leq c_1 \max_{i=1,\dots,n}\{\tilde\alpha^i_{k+1}\},
%  \end{equation}
%  \[
%  c_1 = \sqrt{n} \max \Big\{ \left(\displaystyle\frac{\gamma+L(\sqrt{n}+1)}{\min\{\theta,\delta\}} \right), \ \left(\displaystyle\frac{\gamma+L}{\theta} \right)
%  \Big \}.
%  \]
Then, for every iteration $k$, if $k\in U_{\bar\jmath}$, the Algorithm performs 
\[
Nf_k^u = 2n, 
\]
function evaluations. 

On the other hand, if $k\in S_{\bar\jmath}$, we can distinguish the function evaluations performed by the algorithm in those producing a sufficient decrease in the objective function value and those producing a failure, i.e. the last function evaluation performed by the DF-Linesearch procedure. Hence, in this case we can further distinguish the function evaluations as $Nf_k^s$ and $\overline{Nf}_k^s$. As concerns $\overline{Nf}_k^s$ we have
\[
\overline{Nf}_k^s \leq 2n.
\]
Concerning $Nf_k^s$, every time that one such function evaluation is performed, we have by the instructions of the DF-Linesearch procedure that
\[
\begin{split}
   f(y_k^i + \alpha_j\hat d) - f(y_k^i + \alpha_j/\delta \hat d) & \geq \gamma \left(\frac{1-\delta}{\delta}\right)^2\alpha_j^2\geq \gamma \left(\frac{1-\delta}{\delta}\right)^2 c^2\max_{i=1,\dots,n}\{\tilde\alpha_k^i\}^2 \\
   & \geq \gamma \left(\frac{1-\delta}{\delta}\right)^2 c^2\frac{\epsilon^2}{nc_1^2},
\end{split}
\]
where the last inequality follows from (\ref{bound_maxalpha_success_app}).
Then, recalling that $f$ is bounded from below by $f_{\min}$, summing the above relation over all such function evaluations, we obtain
\[
   f_0 - f_{\min} \geq Nf^s\gamma \left(\frac{1-\delta}{\delta}\right)^2 c^2\frac{\epsilon^2}{nc_1^2},
\]
so that
\[
Nf^s \leq \frac{c_1^2n(f_0 - f_{\min})}{\gamma c^2\epsilon^2}\frac{\delta^2}{(1-\delta)^2}.
\]
% \[
% nf_k^s \leq 2n + \sum_{i\in \tilde{I}} (h_k^i + 1), \quad \text{if } k\in S_{\bar{j}}
% \]
% Where $f_k^u$ and $f_k^s$ represents the number of function evaluations respectively for unsuccessful and successful iterations. 

Finally, recalling the the number of iterations performed by the algorithm is bounded by ${\cal O}(\epsilon^{-2})$, and denoting by $Nf$ the total number of function evaluations performed by the algorithm, we can write in the worst case
\begin{equation}\label{Nf_new}
Nf \leq 2n\left\lceil\frac{n c_1^2 (\Phi_0 - f_{\min})}{\tilde c}\eps^{-2}\right\rceil + \left\lceil\frac{nc_1^2(f_0 - f_{\min})}{\gamma c^2\epsilon^2}\frac{\delta^2}{(1-\delta)^2}\right\rceil =  {\cal O}(\epsilon^{-2}),
\end{equation}
where $c_1$ and $\tilde c$ are defined in (\ref{c1def}) and (\ref{ctildedef}), respectively.
The proof is concluded recalling (\ref{Nf_standard}) and (\ref{Nf_new}).
\end{proof}

\section{Conclusions}\label{sec:conclusion}
In this paper we are concerned about the worst case complexity of linesearch-based derivative-free algorithm for the unconstrained optimization of a black-box objective function. In particular, we propose a general framework, namely  LAM, based on a suitable derivative-free linesearch procedure. We managed to show that the algorithm model takes at most ${\cal O}(n^2\epsilon^{-2})$ iterations and ${\cal O}(n^3\epsilon^{-2})$ function evaluations to drive the norm of the gradient below $\epsilon$. We note that, our complexity bounds are worse than those obtained in \cite{vicente:13} for direct search methods. In particular, in \cite{vicente:13} it has been shown that a direct search method with sufficient decrease achieves a norm of the gradient less than $\epsilon$ in at most ${\cal O}(n\epsilon^{-2})$ iterations and ${\cal O}(n^2\epsilon^{-2})$ function evaluations. However, it must be said that these differences in the complexity bounds are tied to the structure of the framework that we choose to analyze. In particular, in one iteration of  LAM new points $y_k^i$ are produced by taking possibly non-zero step along the search directions. This particular feature of LAM gives great freedom of movement and can positively impact the efficiency of the overall scheme. However, when it comes to bounding the norm of the gradient, this freedom is balanced by a $(\sqrt{n})^2$  rather than $\sqrt{n}$ in the coefficient used in the bound of the gradient.

We remark that it is possible to retain the same complexity bounds of \cite{vicente:13} by appropriately modifying the LAM framework. More in details, in the following we report a modified LAM that explores the search directions always starting from $x_k$ and than chooses the best point to define the new iterate. For this framework we can quite easily obtain the same complexity bounds of \cite{vicente:13} both for the number of iterations and the number of function evaluations but still using different step sizes along the search directions and the extrapolation procedure. In this way we preserve the same strong asymptotic convergence properties and the bound on the norm of the gradient in every iteration (not only in the failure ones).

\noindent\framebox[\textwidth]{\parbox{0.95\textwidth}{%\small
\par\bigskip
\centerline{ {\bf  (modified) LAM }}
\par\medskip
  {\bf Data.} $c\in(0,1),\theta\in(0,1)$, $x_0\in \R^n$, $\tilde\alpha_0^i > 0$, $i\in \{1,\dots,n\}$, and set $d_0^i=e^i$, for $i=1,\ldots,n$.

\par\medskip

{\bf For} $k=0,1,\dots$

 %\par\medskip
 %\qquad Set $\alpha_k^{M}=\max\{\alpha_k^i, \ i\in I_c\}$

\qquad {\bf For} $i=1,\dots,n$

%\qquad\qquad {\bf If} $\tilde\alpha_k^i \geq c\displaystyle\max_{i=1,\dots,n}\{\tilde\alpha_k^i\}$ {\bf then} 
%
%\medskip

\qquad\qquad Set $y_k^i=x_k$.

\qquad\qquad Let $\bar\alpha_k^i = \max\{\tilde\alpha_k^i,c\displaystyle\max_{j=1,\dots,n}\{\tilde\alpha_k^j\}\}$.

%\medskip

\qquad\qquad Compute $\alpha$ and $d$ by the {\tt DF-Linesearch}$(\bar\alpha_k^i,y_k^i,d_k^i;\alpha,d)$.

\medskip

\qquad\qquad {\bf If} $\alpha = 0$ {\bf then} set $\alpha_k^i = 0$ and $\tilde\alpha_{k+1}^i = \theta\bar\alpha_k^i$.

\qquad\qquad {\bf else} set $\alpha_k^i = \alpha$, $\tilde\alpha_{k+1}^i = \alpha$.

%\medskip

%\qquad\qquad {\bf else} set $\alpha_k^i = 0$, $\tilde\alpha_{k+1}^i = \tilde\alpha_k^i$.

\medskip

 \qquad\qquad Set $d_{k+1}^i=d$. %,$y_k^{i+1}=y_k^i+\alpha_k^id_{k+1}^i$.
\par\medskip

 \qquad {\bf End For}

 \qquad Set $x_{k+1}=\displaystyle\argmin_{i=1,\dots,n}\{f(x_k + \alpha_k^i d_{k+1}^i)$\}.

{\bf End For}

\par\bigskip\noindent

}}

However, it should be noted that the better complexity bounds of the modified LAM framework does not typically reflect in a more performing algorithm from a computational point of view.

%\bibliography{mixed}{}
%\bibliographystyle{plain}

\end{document}